\documentclass[12pt,reqno]{amsart}

\title[]{Some properties of path measures}
\author{Christian L\'eonard}

\usepackage{amssymb, amsmath, amsfonts, latexsym, enumerate}
\usepackage[usenames,dvipsnames]{pstricks}
\usepackage{epsfig}

 \setcounter{tocdepth}{1}
 \oddsidemargin 0cm \evensidemargin 0cm \topmargin 0cm
 \textheight 22.5cm \textwidth 16cm


\newtheorem{theorem}{Theorem}
\newtheorem{lemma}[theorem]{Lemma}
\newtheorem{proposition}[theorem]{Proposition}
\newtheorem{corollary}[theorem]{Corollary}

\newtheorem{definition}[theorem]{Definition}
\newtheorem{definitions}[theorem]{Definitions}

\newtheorem{assumption}[theorem]{Assumption}

\theoremstyle{remark}
\newtheorem{remark}[theorem]{Remark}
\newtheorem{remarks}[theorem]{Remarks}
\newtheorem{example}[theorem]{Example}
\newtheorem{examples}[theorem]{Examples}

\numberwithin{theorem}{section}
\numberwithin{equation}{section}



\newcommand{\RR}{\mathbb{R}}
\newcommand{\Rn}{\mathbb{R}^n}

\newcommand{\1}{\mathbf{1}}

\newcommand{\ttimes}{\!\times\!}

\newcommand\pf{_{\#}}

\renewcommand{\ae}{\textrm{-a.e.}}

    \DeclareMathOperator{\dom}{dom}

\newcommand{\boulette}[1]{$\bullet$\ Proof of #1.}
\newcommand{\Boulette}[1]{\par\medskip\noindent $\bullet$\ Proof of #1.}

\newcommand\seq[2]{(#1_#2)_{#2\ge1}}

\newcommand\Lim[1]{\lim_{#1\rightarrow\infty}}
\newcommand\Liminf[1]{\liminf_{#1\rightarrow\infty}}

\newcommand{\lsc}{lower semicontinuous}

\newcommand{\cadlag}{c\`adl\`ag}


\newcommand\XX{\mathcal{X}}
\newcommand\XXX{\XX^2}
\newcommand\PX{\mathrm{P}(\XX)}

\newcommand\PXX{\mathrm{P}(\XXX)}
\newcommand\MX{\mathrm{M}_+(\XX)}

\newcommand\PO{\mathrm{P}(\Omega)}
\newcommand\MO{\mathrm{M}_+(\Omega)}

\newcommand\OO{\Omega}

\newcommand\ii{{[0,1]}}

\newcommand\IX{\int_{\XX}}
\newcommand\IXX{\int_{\XXX}}
\newcommand\IO{\int_\Omega}

\newcommand\Ph{\widehat{P}}

\newcommand\AAA[2]{\mathcal{A}_{[#1,#2]}}

 \begin{document}


 \address{Modal-X. Universit\'e Paris Ouest. B\^at.\! G, 200 av. de la R\'epublique. 92001 Nanterre, France}
 \email{christian.leonard@u-paris10.fr}
 \keywords{Unbounded measure, conditional expectation, relative entropy, stochastic processes, Schr\"odinger problem}
 \subjclass[2010]{28A50,60J25}

\begin{abstract} 
We call any measure on a path space, a \emph{path measure}.
Some notions about path measures which appear naturally  when solving the Schr\"odinger problem  are presented and worked out in detail. 
\end{abstract}

\maketitle 
\tableofcontents


\section*{Introduction}

We call any measure on a path space, a \emph{path measure}.
Some notions about path measures which appear naturally  when solving the Schr\"odinger problem (see \eqref{eq-03} below and \cite{Leo12e}) are presented and worked out in detail.

\subsection*{Aim of this article}

This paper is about three separate items :
\begin{enumerate}
\item
Disintegration of  an unbounded  measure;
\item
Basic properties of the relative entropy with respect to an unbounded measure;
\item
Positive integration with respect to a Markov measure.
\end{enumerate}

Although items (1) and (2) are mainly about general unbounded measures, we are motivated by their applications to path measures. 
\\
In particular, it is shown that when $Q$ is an unbounded path measure, some restriction must be imposed on $Q$ for being able to consider conditional expectations such as $Q(\cdot|X_t).$ This is the content of the notion of \emph{conditionable path measure} which is introduced at Definition \ref{def-06}. 
\\
Some care is also required when working with the relative entropy with respect to an unbounded reference measure. We also give a detailed proof of the additive  property of the relative entropy at Theorem \ref{res-02}. Indeed, we didn't find in the literature a complete proof of this  well known result.

\subsection*{Some notation}

Let $\XX$ be a Polish state space furnished with the corresponding Borel $\sigma$-field and $\OO=D(\ii,\XX)$ the space of all \cadlag\ (right-continuous and left-limited) paths from the unit time interval $\ii$ to $\XX.$ Depending on the context, we may only consider $\OO=C(\ii,\XX)$, the space of all continuous paths. As usual, the $\sigma$-field on $\OO$ is generated by the canonical process
\begin{equation*}
X_t(\omega):= \omega_t\in\XX,\quad \omega=(\omega_s)_{0\le s\le1}\in\OO,\ 0\le t\le1.
\end{equation*}

We write $\mathrm{M}_+(Y)$ for the set of all nonnegative measures on a space $Y$, and $\mathrm{P}(Y)$ for the subset of all  probability measures. Let $Q\in \mathrm{M}_+(Y),$ the push-forward of $Q$ by the measurable mapping $\phi:Y\to\XX$ is denoted by $\phi\pf Q$ or $Q_\phi\in\MX.$
\\
Any positive measure $Q\in\MO$ on the path space $\OO$ is called a \emph{path measure}. For any subset $\mathcal{T}\subset\ii,$ we denote $X _{\mathcal{T}}=(X_t)_{t\in \mathcal{T}}$ and $Q _{\mathcal{T}}=(X _{\mathcal{T}})\pf Q=Q(X _{\mathcal{T}}\in\cdot)\in \mathrm{M}_+(\OO _{\mathcal{T}})$ the push-forward of $Q$ by $X _{\mathcal{T}}$ on the set of positive measures on the restriction $\OO _{\mathcal{T}}$ of $\OO$ to $\mathcal{T}.$ In particular, for each $0\le t\le1,$ $Q_t=Q(X_t\in\cdot)\in\MX.$

\subsection*{Motivation}

Take a reference path measure $R\in\MO$ and consider the problem
\begin{equation}\label{eq-03}
H(P|R)\to \textrm{min};\qquad P\in\PO:P_0=\mu_0, P_1=\mu_1
\end{equation}
of minimizing the relative entropy $$H(P|R):=\IO\log \left(\frac{dP}{dR}\right) \,dP\in (-\infty,\infty]$$ of $P\in\PO$ with respect to $R\in\MO$, among all the  path probability measures $P\in\PO$ such that the initial and final marginals $P_0$ and $P_1$ are asked to equal respectively two prescribed probability measures $\mu_0$ and $\mu_1\in\PX$ on the state space $\XX.$  
This entropy minimization problem is called the \emph{Schr\"odinger problem}. It is described in the author's survey paper \cite{Leo12e} where it is exemplified with $R$ a reversible Markov process, for instance the \emph{reversible Brownian motion} on $\Rn.$ 
\\
If one wants to describe  the reversible Brownian motion on $\Rn$ as a measure on the path space $\OO=C(\ii,\Rn),$ one has to consider an \emph{unbounded measure}. Indeed, its reversing measure  is Lebesgue measure (or any of its positive multiple), and its ``law" is
\begin{equation*}
R=\int _{\Rn}\mathcal{W}_x(\cdot)\,dx\in\MO,
\end{equation*}
where $\mathcal{W}_x\in\PO$ stands for the Wiener measure with starting position $x\in\Rn.$ Obviously, this path measure  has the same unbounded mass as Lebesgue measure.
More generally, any path measure $Q\in\MO$ has the same mass as its time-marginal measures $Q_t\in\MX$ for all $t\in\ii.$ In particular, any reversible path measure in $\MO$ with an unbounded reversing measure in $\MX,$ is also unbounded. 
\\
In connection with the Schr\"odinger problem, the notion of $(f,g)$-transform of a possibly unbounded Markov measure $R$ is introduced in \cite{Leo12e}. It is defined by
\begin{equation}\label{eq-02}
P=f(X_0)g(X_1)\,R\in\PO
\end{equation}
where $f$ and $g$ are measurable nonnegative functions such that $E_R(f(X_0)g(X_1))=1.$
It is a time-symmetric extension of the usual Doob $h$-transform. It appears that the product form  of the Radon-Nikodym derivative $f(X_0)g(X_1)$ implies that $P$ is the solution to the Schr\"odinger problem with the correct prescribed marginals $\mu_0$ and $\mu_1$ which are given by
\begin{equation}\label{eq-01}
\left\{\begin{array}{lcl}
\mu_0(dx)&=&f(x)E_R(g(X_1)\mid X_0=x)\, R_0(dx),\\
\mu_1(dy)&=& E_R(f(X_0)\mid X_1=y)g(y)\, R_1(dy).
\end{array}\right.
\end{equation}

\subsection*{Disintegration of  an unbounded path measure}

One has to be careful when saying that the reversible Brownian motion $R\in\MO$ is Markov. Of course, this means that for all $0\le t\le1,$ $E_R(X _{[t,1]}\in\cdot\mid X _{[0,t]})=E_R(X _{[t,1]}\in\cdot\mid X_t).$ Similarly, we wrote \eqref{eq-01} without hesitating.  But the problem is to define properly the conditional expectation with respect to an \emph{unbounded} measure.  This will be the purpose of Section \ref{sec-disint} where  extensions of the conditional expectation are considered and a definition of the Markov property for an unbounded path measure is given. The general theory of conditional expectation is recalled at the appendix Section \ref{sec-CE} to emphasize the role of $\sigma$-finiteness.
 
\subsection*{Relative entropy with respect to an unbounded measure}

The relative entropy with respect to a probability measure is well-known. But once we have an unbounded path measure at hand, what about the relative entropy with respect to an unbounded measure and its additive  property? This is the subject of Section \ref{sec-entropy}.

\subsection*{Positive integration with respect to a Markov measure}

It is assumed in the $(f,g)$-transform formula \eqref{eq-02} that $E_R(f(X_0)g(X_1))<\infty$ with $f,g\ge0,$ while the conditional expectations $E_R(f(X_0)\mid X_1)$ and $E_R(g(X_1)\mid X_0)$ appear at \eqref{eq-01}. But the assumption that  $f(X_0)g(X_1)$ is $R$-integrable doesn't ensure, in general, that $f(X_0)$ and $g(X_1)$ are separately $R$-integrable; which is a prerequisite for defining properly the conditional expectations $E_R(f(X_0)\mid X_1)$ and $E_R(g(X_1)\mid X_0)$. However, we need a general setting for the conditional expectations in \eqref{eq-01} to be meaningful. This will be presented at Section \ref{sec-positive} where we take advantage of the positivity of the functions $f$ and $g$.

\section{Disintegration of  an unbounded path measure}\label{sec-disint}

We often need the following notion which is a little more restrictive than the absolute continuity, but which matches with it whenever the measures are $\sigma$-finite.

\begin{definition} Let $R$ and $Q\in\MO$ be two positive measures on some measurable space $\Omega.$
One says that \emph{$Q$ admits a density with respect to $R$} if there exists a measurable function $\theta:\Omega\to[0,\infty)$ which verifies
\begin{equation*}
    \IO f\,dQ=\IO f\theta \,dR\in [0,\infty],\quad \forall f\ge0 \textrm{ measurable}.
\end{equation*}
    We write this relation
    $$
Q\prec R
    $$
    and we denote
    $$
\theta:=\frac{dQ}{dR}
    $$
    which is called the  \emph{Radon-Nikodym derivative} of $Q$ with respect to $R.$
\end{definition}

Thanks to the monotone convergence theorem, it is easy to check that if $R$ is $\sigma$-finite and
$\theta:\Omega\to[0,\infty)$ is a nonnegative measurable function, then
$$\theta R(A):=\int_A\theta\,dR,\quad A\in \mathcal{A},$$ defines a positive measure on the $\sigma$-field $\mathcal{A}.$

\begin{proposition}\label{res-LZ03}
Let $R$ and $Q$ be two positive measures. Suppose that $R$ is
$\sigma$-finite. The following assertions are equivalent:
\begin{enumerate}
    \item[(a)] $Q\prec R$
    \item[(b)] $Q$ is $\sigma$-finite and $Q\ll R.$
\end{enumerate}
\end{proposition}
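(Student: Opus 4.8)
The plan is to treat the two implications separately, the implication (a)$\Rightarrow$(b) being elementary and (b)$\Rightarrow$(a) resting on the classical Radon--Nikodym theorem together with a finiteness argument. Throughout I would exploit that $R$ is assumed $\sigma$-finite, so that $\Omega$ may be written as an increasing union $\Omega=\bigcup_n B_n$ with $R(B_n)<\infty$.

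For (a)$\Rightarrow$(b), suppose $Q\prec R$ with density $\theta=dQ/dR$. Absolute continuity is immediate: if $R(A)=0$, then applying the defining identity to $f=\1_A$ gives $Q(A)=\int_A\theta\,dR=0$, so $Q\ll R$. To get $\sigma$-finiteness of $Q$, I would set $\Omega_n:=B_n\cap\{\theta\le n\}$. Since $\theta$ is everywhere finite, $\{\theta\le n\}\uparrow\Omega$, hence $\Omega_n\uparrow\Omega$; and $Q(\Omega_n)=\int_{\Omega_n}\theta\,dR\le n\,R(B_n)<\infty$. This exhibits a countable exhaustion of $\Omega$ by sets of finite $Q$-measure, so $Q$ is $\sigma$-finite.

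For (b)$\Rightarrow$(a), suppose $Q$ is $\sigma$-finite and $Q\ll R$. Since both $R$ and $Q$ are $\sigma$-finite, the Radon--Nikodym theorem provides a measurable $\theta:\Omega\to[0,\infty]$ with $Q(A)=\int_A\theta\,dR$ for every measurable $A$. The full identity $\int f\,dQ=\int f\theta\,dR$ for arbitrary measurable $f\ge0$ then follows by the standard approximation: it holds for indicators by construction, extends to nonnegative simple functions by linearity, and to general $f\ge0$ by monotone convergence. The remaining point is that the definition of $\prec$ demands a density valued in $[0,\infty)$, so I must check $\theta<\infty$ $R$-a.e. This is where the $\sigma$-finiteness of $Q$ is essential: on $N:=\{\theta=\infty\}$ one has $Q(C)=\int_C\theta\,dR\in\{0,\infty\}$ for every measurable $C\subset N$, the value being $\infty$ as soon as $R(C)>0$; decomposing $N$ into pieces of finite $Q$-measure (possible precisely because $Q$ is $\sigma$-finite) forces each such piece to be $R$-null, whence $R(N)=0$. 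Redefining $\theta$ to be $0$ on the $R$-null (and, by $Q\ll R$, also $Q$-null) set $N$ yields a finite-valued density without altering either side of the identity, and completes the argument.

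The main obstacle is the finiteness of the density in (b)$\Rightarrow$(a). Once the Radon--Nikodym theorem is invoked, the extension to unbounded $f$ is routine; but pinning down that $\theta$ may be chosen with values in $[0,\infty)$ rather than $[0,\infty]$ is exactly the place where $\sigma$-finiteness of $Q$ (as opposed to mere absolute continuity) enters, and it is what makes the equivalence, and hence the notion $\prec$, genuinely sharper than $\ll$ in the non-$\sigma$-finite setting.
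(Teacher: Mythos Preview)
Your proof is correct and follows essentially the same strategy as the paper: for (a)$\Rightarrow$(b) you intersect a $\sigma$-finite exhaustion for $R$ with level sets of the density (the paper uses a partition and the sets $\{k-1\le\theta<k\}$, which is cosmetically different), and for (b)$\Rightarrow$(a) you invoke Radon--Nikodym. Your extra care in (b)$\Rightarrow$(a) about verifying $\theta<\infty$ $R$-a.e.\ is not needed in the paper's setting because its own Radon--Nikodym theorem (Theorem~\ref{res-LZ33}) already yields a finite-valued density by construction, but the argument you give is correct and a sensible safeguard against formulations that allow $[0,\infty]$-valued densities.
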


\begin{proof}
The implication $(b) \Rightarrow (a)$ is Radon-Nikodym Theorem \ref{res-LZ33}. Let us show its converse  $(a)\Rightarrow (b).$
The absolute continuity $Q\ll R$ is straightforward. Let us prove that $Q$ is $\sigma$-finite. Let $\seq An$ be a  $\sigma$-finite partition of $R.$ Define for all $k\ge1,$ $B_k=\{k-1\le dQ/dR< k\}$.  The sequence $\seq Bk$ is also a measurable partition. Hence, $(A_n\cap B_k)_{n,k\ge1}$ is a countable measurable partition. On the other hand, for any $(n,k),$ $Q(A_n\cap B_k)=E_R(\1_{A_n\cap B_k}\,dQ/dR)\le k
 R(A_n)<\infty.$ Therefore $(A_n\cap B_k)_{n,k\ge1}$ is a  $\sigma$-finite partition of $Q.$
\end{proof}

Let $Q,R\in\MO$ be two (possibly unbounded) positive measures on $\OO$.  Let $\phi:\OO\to\XX$ be a measurable mapping from $\OO$ to a Polish (separable, complete metric) space $\XX$ equipped with its Borel $\sigma$-field.
Although $Q\ll R$ implies that $Q_\phi\ll R_\phi,$ in general we do not have 
$Q_\phi\prec R_\phi$ when $Q\prec R,$ as the following example shows;

\begin{example}\label{exp-02}   The measure $R$ is the uniform probability measure on $\Omega=[0,1]\times
[0,1],$ $Q$ is defined by $Q(dxdy)=1/y\, R(dxdy)$ and we denote the canonical projections by
$\phi_X(x,y)=x,$ $\phi_Y(x,y)=y,$ $(x,y)\in\Omega.$  We observe that on the one hand $R,$ $Q$ and
$R_{\phi_X}(dx)=\mathrm{Leb}(dx)=dx$ are $\sigma$-finite, but on the other hand, $Q_{\phi_X}$ is defined by
$Q_{\phi_X}(A)=\left\{\begin{array}{ll}
  0 & \textrm{if } \mathrm{Leb}(A)=0\\
  +\infty & \textrm{otherwise} \\
\end{array}\right..$ We have $Q_{\phi_X}\ll R_{\phi_X},$ but $Q_{\phi_X}$
is not $\sigma$-finite. We also see that $Q_{\phi_Y}(dy)=1/y\,dy$
is $\sigma$-finite.
\end{example}

\subsection*{An extension of the conditional expectation}

To extend easily results about conditional expectation with respect to a bounded measure (in particular Propositions \ref{res-LZ06} and \ref{res-LZ04}) to a $\sigma$-finite measure, it is useful to rely on the following preliminary result.

\begin{lemma}\label{res-LZ08} Let us assume that $R_\phi$ is $\sigma$-finite.
\begin{enumerate}[(a)]
    \item Let $\gamma:\XX\to (0,1]$ be a measurable function such that $\gamma R_\phi$ is a bounded measure. Then, $L^1(R)\subset L^1(\gamma(\phi) R)$ and for any $f\in
L^1(R),$
    $ 
    E_R(f\mid\phi)=E_{\gamma(\phi) R}(f\mid\phi), R\ae
    $ 
    \item There exists a function $\gamma\in
L^1(R_\phi)$ such that $0<\gamma\le1, R_\phi\ae$ In particular,
the measure $\gamma(\phi) R$ is bounded and equivalent to $R,$ i.e. for any measurable subset $A,$ $R(A)=0\iff [\gamma(\phi) R](A)=0.$
     \item Let $Q$ be another positive measure on $\Omega$ such that $Q_\phi$ is $\sigma$-finite.
Then, there exists a function $\gamma\in L^1(R_\phi+Q_\phi)$ such that $0<\gamma\le1, (R_\phi+Q_\phi)\ae$ In particular, the measures  $\gamma(\phi) R$ and $\gamma(\phi) Q$ are bounded and respectively equivalent to  $R$ and $Q.$
\end{enumerate}
\end{lemma}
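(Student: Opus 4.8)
The plan is to build the bounded weight explicitly for (b), to read off (a) directly from the characterizing relation of the conditional expectation, and to obtain (c) by feeding the sum $R+Q$ into (b). I would carry out (b) first since it is the concrete construction, then (a) as the conceptual core, then (c) as a short consequence.

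\emph{Part (b).} Since $R_\phi$ is $\sigma$-finite, I would fix a countable measurable partition $\seq Xn$ of $\XX$ with $R_\phi(X_n)<\infty$ for all $n$, and set
$$\gamma:=\sum_{n\ge1}c_n\1_{X_n},\qquad c_n:=2^{-n}/(1+R_\phi(X_n))\in(0,1].$$
Then $0<\gamma\le1$ on all of $\XX$, and $\int_\XX\gamma\,dR_\phi=\sum_{n\ge1}c_nR_\phi(X_n)\le\sum_{n\ge1}2^{-n}<\infty$, so $\gamma\in L^1(R_\phi)$. Applying the push-forward identity $\int_\OO g(\phi)\,dR=\int_\XX g\,dR_\phi$ with $g=\gamma$ shows that $\gamma(\phi)R$ has finite total mass, hence is bounded; and since $\gamma>0$ everywhere we have $\gamma(\phi)>0$, so $\gamma(\phi)R$ and $R$ share the same null sets, i.e. they are equivalent.

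\emph{Part (a).} Because $\gamma\le1$, one has $\int_\OO|f|\gamma(\phi)\,dR\le\int_\OO|f|\,dR$, giving $L^1(R)\subset L^1(\gamma(\phi)R)$ and, since $\gamma(\phi)R$ is bounded, legitimacy of the right-hand conditional expectation. Writing $h:=E_R(f\mid\phi)$ (a $\sigma(\phi)$-measurable element of $L^1(R)$), I would verify that $h$ is a version of $E_{\gamma(\phi)R}(f\mid\phi)$ by testing against an arbitrary $A\in\sigma(\phi)$: as $\1_A\gamma(\phi)$ is bounded and $\sigma(\phi)$-measurable, the defining relation of $h$ — extended from indicators to bounded $\sigma(\phi)$-measurable weights by linearity and dominated convergence — yields
$$\int_A h\,d(\gamma(\phi)R)=\int_\OO\1_A\gamma(\phi)h\,dR=\int_\OO\1_A\gamma(\phi)f\,dR=\int_A f\,d(\gamma(\phi)R).$$
This is exactly the characterizing identity for $E_{\gamma(\phi)R}(f\mid\phi)$, so by uniqueness $h=E_{\gamma(\phi)R}(f\mid\phi)$ holds $\gamma(\phi)R\ae$, hence $R\ae$ since $\gamma>0$ makes the two measures equivalent.

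\emph{Part (c) and the main obstacle.} Here $R+Q\in\MO$ and $(R+Q)_\phi=R_\phi+Q_\phi$ is again $\sigma$-finite (a common refinement $(X_n\cap Y_m)_{n,m}$ of two $\sigma$-finite partitions is $\sigma$-finite), so applying (b) to $R+Q$ produces $\gamma\in L^1(R_\phi+Q_\phi)$ with $0<\gamma\le1$. Then $\int_\XX\gamma\,d(R_\phi+Q_\phi)<\infty$ forces both $\int_\XX\gamma\,dR_\phi<\infty$ and $\int_\XX\gamma\,dQ_\phi<\infty$, whence $\gamma(\phi)R$ and $\gamma(\phi)Q$ are both bounded, and $\gamma>0$ gives the two equivalences. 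I expect the only delicate point to lie in (a): one must first know that $E_R(f\mid\phi)$ is itself well defined when $R$ is unbounded, which holds precisely because $R_\phi$ is $\sigma$-finite, and then justify that the defining relation for $h$ may be tested against the bounded weight $\1_A\gamma(\phi)$ rather than against plain indicators. This is where the role of $\sigma$-finiteness stressed in the appendix does the real work.
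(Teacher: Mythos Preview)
Your proof is correct and follows essentially the same approach as the paper: the explicit weight $\gamma=\sum_n c_n\1_{X_n}$ built from a $\sigma$-finite partition is the same construction (the paper writes it for (c) and obtains (b) as the special case $Q=0$, whereas you do (b) first and then feed $R+Q$ into it), and your verification of (a) via the characterizing identity tested against $\1_A\gamma(\phi)$ is the same argument as the paper's test against $h\in\gamma B_\phi$, just phrased from the other direction.
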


\begin{proof}
    \boulette{(a)}
Denote $B_\phi$ the space of all $\mathcal{A}(\phi)$-measurable and bounded functions and $\gamma B_\phi:=\{h:h/\gamma(\phi)\in
B_\phi\}\subset B_\phi.$ For all $f\in L^1(R)$ and
$h\in \gamma B_\phi,$
\begin{eqnarray*}
  \IO hf\,dR
  &=& \IO \frac h{\gamma(\phi)} f\gamma(\phi)\,dR \\
  &=& \IO \frac h{\gamma(\phi)} E_{\gamma(\phi) R}(f\mid\phi)\, d(\gamma(\phi) R) \\
  &=& \IO h E_{\gamma(\phi) R}(f\mid\phi)\,dR.
\end{eqnarray*}
On the other hand, $\IO hf\,dR=\IO h E_R(f\mid\phi)\,dR$ so that
$$
\IO h E_R(f\mid\phi)\,dR_{\mathcal{A}(\phi)}=\IO h E_{\gamma(\phi)
R}(f\mid\phi)\,dR_{\mathcal{A}(\phi)}, \quad\forall h\in\gamma B_\phi.
$$
In other words, the measures $E_R(f\mid\phi)R_{\mathcal{A}(\phi)}$ and $E_{\gamma(\phi)
R}(f\mid\phi)R_{\mathcal{A}(\phi)}$ match on $\gamma B_\phi.$ But,
since $\gamma(\phi)>0,$ the measures on $\mathcal{A}(\phi)$ are characterized by their values on $\gamma B_\phi.$ Consequently, $E_R(f\mid\phi)R_{\mathcal{A}(\phi)}=E_{\gamma(\phi)
R}(f\mid\phi)R_{\mathcal{A}(\phi)}.$ This completes the proof of statement (1).
 \Boulette{(b)} It is a particular instance of statement (c), taking
 $Q=0.$

    \Boulette{(c)}
If $R$ and $Q$ are bounded, it is sufficient to take $\gamma\equiv 1.$
Suppose now that $R+Q$ is unbounded. The intersection
of two partitions which are  respectively $\sigma$-finite with respect to $R_\phi$ and
$Q_\phi$ is a  partition  $\seq\XX n$ of $\XX$ which is simultaneously
$\sigma$-finite with respect to $R_\phi$ and $Q_\phi.$  We assume without loss of generality that  $(R_\phi+Q_\phi)(\XX_n)\ge1$ for all $n.$
Let us define
$$
\gamma:=\sum_{n\ge1}\frac{2^{-n}}{(R_\phi+Q_\phi)(\XX_n)}\1_{\XX_n}.
$$
It is a measurable function on $\XX.$ As $\IO
\gamma(\phi)\,d(R+Q)=1$ and $0<\gamma(\phi)\le1, (R+Q)\ae,$
$\gamma(\phi) (R+Q)$ is a probability measure that is equivalent to 
$R+Q$ and $L^1(R+Q)\subset L^1(\gamma(\phi) (R+Q)).$
\end{proof}

\begin{definition}[Extension of the conditional expectation]\label{def-LZ07}
With Lemma \ref{res-LZ08}, we see that $E_{\gamma(\phi) R}(\cdot\mid\phi)$
is an extension of $E_{R}(\cdot\mid\phi)$ from $L^1(R)$ to $L^1(\gamma(\phi) R)$.
 We denote
\begin{equation*}
    E_{R}(f\mid\phi):=E_{\gamma(\phi) R}(f\mid\phi),\quad  f\in L^1(\gamma(\phi) R)
\end{equation*}
where $\gamma$ is a function the existence of which is ensured by
Lemma \ref{res-LZ08}.
\end{definition}

\begin{theorem}\label{res-LZ05}
Let $R,Q\in\MO$ and $\phi:\Omega\to\XX$ a measurable mapping in the Polish space $\XX$. We suppose
that $Q\prec R,$ and also that  $R_\phi$ are $Q_\phi$ $\sigma$-finite measures on $\XX.$ Then,
\begin{enumerate}[(a)]
    \item $E_R(\cdot\mid\phi)$ and $E_Q(\cdot\mid\phi)$ admit respectively a regular conditional probability kernel $x\in\XX\mapsto R(\cdot\mid\phi=x)\in\PO$
and $x\in\XX\mapsto Q(\cdot\mid\phi=x)\in\PO.$
    \item $\displaystyle{Q_\phi\prec R_\phi,\quad \frac{dQ}{dR}\in L^1(\gamma(\phi)
    R)}$
    and $$
\frac{dQ_\phi}{dR_\phi}(x)=E_R\left(\frac{dQ}{dR}\mid\phi=x\right),\quad
\forall x\in\XX,\ R_\phi\ae
    $$
    The function $\gamma$ in the above formulas is the one whose existence is ensured by Lemma  \ref{res-LZ08}-(c); it also appears in  Definition \ref{def-LZ07}.
     \item Moreover, $Q(\cdot\mid\phi)\prec R(\cdot\mid\phi),$
     $Q\ae$ and
\begin{equation}\label{eq-LZ53}
    \frac{dQ}{dR}(\omega)=\frac{dQ_\phi}{dR_\phi}(\phi(\omega))\frac{dQ(\cdot\mid\phi
       =\phi(\omega))}{dR(\cdot\mid\phi=\phi(\omega))}(\omega),
       \quad\forall \omega\in\Omega,\ Q\ae
\end{equation}

        \item A formula, more practical  than \eqref{eq-LZ53} is the following one. For any bounded measurable function $f$, we have
\begin{equation}\label{eq-LZ52}
    E_Q(f\mid\phi)=\frac{E_R\left(\frac{dQ}{dR}
    f\mid\phi\right)}{E_R\left(\frac{dQ}{dR}\mid\phi\right)},\quad Q\ae
\end{equation}
where no division by zero occurs since
$E_R\left(\frac{dQ}{dR}\mid\phi\right)>0, Q\ae$
\end{enumerate}
\end{theorem}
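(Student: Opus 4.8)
The plan is to reduce everything to the bounded-measure case via Lemma~\ref{res-LZ08}-(c), which supplies a single function $\gamma$ making both $\gamma(\phi)R$ and $\gamma(\phi)Q$ finite measures equivalent to $R$ and $Q$ respectively. Write $\bar R:=\gamma(\phi)R$ and $\bar Q:=\gamma(\phi)Q$. The key observation is that multiplying by $\gamma(\phi)$ does not disturb the $\phi$-conditioning structure: since $\gamma(\phi)$ is $\mathcal A(\phi)$-measurable, it cancels in every conditional expectation with respect to $\phi$. Moreover $d\bar Q/d\bar R = (\gamma(\phi)\,dQ/dR)/\gamma(\phi) = dQ/dR$, so the Radon-Nikodym derivative is preserved verbatim. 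Thus all four assertions, once proved for the bounded pair $(\bar R,\bar Q)$, transfer back to $(R,Q)$ by invoking the definition $E_R(\cdot\mid\phi):=E_{\bar R}(\cdot\mid\phi)$ from Definition~\ref{def-LZ07} and noting that $\bar R$-a.s.\ and $R$-a.s.\ coincide (and likewise for $Q$) by the asserted equivalence.

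For part (a), since $\XX$ is Polish and $\bar R,\bar Q$ are now bounded (in fact can be normalized to probabilities), the existence of regular conditional probability kernels $R(\cdot\mid\phi=x)$ and $Q(\cdot\mid\phi=x)$ is the standard disintegration theorem on Polish spaces; I would simply cite it. For part (b), I would first establish $Q_\phi\prec R_\phi$ directly from the definition: for any nonnegative measurable $g$ on $\XX$, the tower property gives $\int g\,dQ_\phi=\int g(\phi)\,dQ=\int g(\phi)\tfrac{dQ}{dR}\,dR=\int g\,E_R(\tfrac{dQ}{dR}\mid\phi)\,dR_\phi$, which exhibits $E_R(dQ/dR\mid\phi)$ as the density $dQ_\phi/dR_\phi$ and simultaneously shows $Q_\phi\prec R_\phi$. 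The integrability $dQ/dR\in L^1(\gamma(\phi)R)$ follows because $\int (dQ/dR)\,d\bar R=\int\gamma(\phi)\,dQ=\bar Q(\OO)<\infty$.

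The heart of the matter is part (c), the disintegration formula \eqref{eq-LZ53}, and I expect this to be the main obstacle. The strategy is to verify that the right-hand side, call it $\Theta(\omega):=\tfrac{dQ_\phi}{dR_\phi}(\phi(\omega))\,\tfrac{dQ(\cdot\mid\phi=\phi(\omega))}{dR(\cdot\mid\phi=\phi(\omega))}(\omega)$, satisfies the defining relation $\int f\,dQ=\int f\Theta\,dR$ for all nonnegative measurable $f$, whence $\Theta=dQ/dR$ by essential uniqueness of the density. To do this I would disintegrate both sides along $\phi$: writing $\int f\,dQ=\int_\XX\big(\int f(\omega)\,Q(d\omega\mid\phi=x)\big)\,Q_\phi(dx)$ and substituting $dQ_\phi=\tfrac{dQ_\phi}{dR_\phi}\,dR_\phi$ together with the fibrewise density $dQ(\cdot\mid\phi=x)=\tfrac{dQ(\cdot\mid\phi=x)}{dR(\cdot\mid\phi=x)}\,dR(\cdot\mid\phi=x)$. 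The delicate points are, first, justifying the fibrewise absolute continuity $Q(\cdot\mid\phi=x)\prec R(\cdot\mid\phi=x)$ for $R_\phi$-a.e.\ $x$ (equivalently $Q$-a.s.), which I would extract from $Q\prec R$ by a monotone-class argument comparing the two disintegrations, and second, the measurability in $x$ of the fibre densities so that the iterated integral is legitimate; both are standard consequences of the Polish disintegration theorem but require care. Finally part (d) is a formal corollary: substituting the product formula \eqref{eq-LZ53} into the definition of $E_Q(f\mid\phi)$ via the kernel $Q(\cdot\mid\phi=x)$ and using \eqref{eq-LZ52}'s denominator $E_R(dQ/dR\mid\phi)=dQ_\phi/dR_\phi$ from part (b) makes the $dQ_\phi/dR_\phi$ factors cancel; the strict positivity $E_R(dQ/dR\mid\phi)>0$ holds $Q$-a.s.\ because the set $\{E_R(dQ/dR\mid\phi)=0\}$ pulls back to a $\phi$-measurable set of zero $Q_\phi$-mass, hence zero $Q$-mass.
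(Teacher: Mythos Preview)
Your proposal is correct and follows essentially the same route as the paper: reduce to the bounded case via the function $\gamma$ from Lemma~\ref{res-LZ08}-(c), using the identities $d\bar Q/d\bar R=dQ/dR$ and $d\bar Q_\phi/d\bar R_\phi=dQ_\phi/dR_\phi$, then invoke the standard bounded-measure facts (which the paper packages as Propositions~\ref{res-LZ06} and~\ref{res-LZ04}). The only minor divergence is the order in which you handle (c) and (d): the paper obtains (d) directly from Proposition~\ref{res-LZ04}(2)--(3) and then (c) is a rearrangement (since (d) exhibits $\tfrac{dQ}{dR}/E_R(\tfrac{dQ}{dR}\mid\phi)$ as the fibre density), whereas you build (c) by a direct disintegration-and-verify argument and then deduce (d); both are fine, though the paper's route avoids the ``delicate points'' you flag about fibrewise measurability.
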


Identity \eqref{eq-LZ53} also writes more synthetically as
\begin{equation*}
    \frac{dQ}{dR}(\omega)=\frac{dQ_\phi}{dR_\phi}(\phi(\omega))\frac{dQ(\cdot\mid\phi)}{dR(\cdot\mid\phi)}(\omega),
     \quad \forall \omega\in\Omega,\ Q\ae
\end{equation*}
or more enigmatically as
\begin{equation*}
    \frac{dQ}{dR}(\omega)
   =\frac{dQ_\phi}{dR_\phi}(x)\frac{dQ(\cdot\mid \phi=x)}{dR(\cdot\mid
   \phi=x)}(\omega),\quad \forall(\omega,x),\ Q_\phi(dx)R(d\omega\mid
   \phi=x)\ae
\end{equation*}
since we have $\phi(\omega)=x,$ $Q_\phi(dx)R(d\omega\mid
\phi=x)$-almost surely.

\begin{proof}[Proof of Theorem \ref{res-LZ05}]  If $R$ and $Q$ are bounded measures, this theorem is an immediate consequence of Propositions \ref{res-LZ06} and
\ref{res-LZ04}.
\\
When $R_\phi$ and $Q_\phi$ are
$\sigma$-finite, we are allowed to invoke Lemma \ref{res-LZ08}:
$\gamma(\phi)R$ and $\gamma(\phi)Q$ are bounded measures and we can apply (i) to them. But, 
\begin{equation*}
  \frac{dQ}{dR} =  \frac{d(\gamma(\phi)Q)}{d(\gamma(\phi)R)}\qquad \textrm{and}\qquad
  \frac{dQ_\phi}{dR_\phi} = \frac{d(\gamma Q_\phi)}{d(\gamma R_\phi)}.
\end{equation*}
This completes the proof of the theorem.
\end{proof}

\subsection*{Hilbertian conditional expectation}

So far, we have considered the conditional expectation of a function $f$ in $L^1(R).$ If the reference measure $R$ is bounded, then $L^2(R)\subset L^1(R).$ But if $R$ is unbounded, this inclusion fails and  the   conditional expectation which we have just built is not valid for every $f$ in
$L^2(R).$ It is immediate to extend this notion from $L^1(R)\cap
L^2(R)$ to $L^2(R)$, interpreting the fundamental relation
\eqref{eq-LZ03} in restriction to $L^2(R)$:
\begin{equation*}
\IO hf\,dR= \IO h E_R(f\mid\mathcal{A})\,dR,\quad
     \forall h\in B_\mathcal{A}, f\in L^1(R)\cap L^2(R),
\end{equation*}
as an Hilbertian projection. We thus define the operator
$$
E_R(\cdot\mid \mathcal{A}):L^2(R)\to L^2(R_\mathcal{A})
$$
as an orthogonal projection on the Hilbertian subspace 
$L^2(R_\mathcal{A}).$ In particular, when $\mathcal{A}$ is the $\sigma$-field generated by the measurable mapping $\phi:\Omega\to\XX,$
\begin{equation*}
    E_R(\cdot\mid \phi):L^2(\Omega,R)\to L^2(\XX,R_\phi)
\end{equation*}
is specified for any function $f\in L^2(R)$ by
\begin{equation*}
 \IO h(\phi(\omega)) f(\omega)\,R(d\omega)=  \IX h(x) E_R(f\mid
 \phi=x)\,R_\phi(dx),
 \quad \forall h\in L^2(\XX,R_\phi).
\end{equation*}

\subsection*{Conditional expectation of path measures}

Now we particularize $\OO$ to be the path space $D(\ii,\XX)$ or $C(\ii,\XX).$

\begin{lemma}\label{res-01}
Let $Q\in\MO$ be a path measure and $\mathcal{T}\subset\ii$ a time subset. For $Q_{\mathcal{T}}$ to be a 
$\sigma$-finite measure, it is sufficient that there is some $t_o\in\mathcal{T}$ such that
$Q_{t_o}$ is a $\sigma$-finite measure.
\end{lemma}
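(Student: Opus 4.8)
The plan is to transfer a $\sigma$-finite partition of $\XX$ adapted to $Q_{t_o}$ up to the restricted path space $\OO_{\mathcal{T}}$ by means of the evaluation map at the distinguished time $t_o$. Since $t_o\in\mathcal{T}$, the coordinate projection $\pi_{t_o}:\OO_{\mathcal{T}}\to\XX$, $(\omega_t)_{t\in\mathcal{T}}\mapsto\omega_{t_o}$, is well-defined; it is measurable because the $\sigma$-field on $\OO_{\mathcal{T}}$ is generated by the coordinates $(X_t)_{t\in\mathcal{T}}$, and by construction $\pi_{t_o}\circ X_{\mathcal{T}}=X_{t_o}$ on $\OO$.

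First I would use the $\sigma$-finiteness of $Q_{t_o}$ to fix a countable measurable partition $(A_n)_{n\ge1}$ of $\XX$ with $Q_{t_o}(A_n)<\infty$ for every $n$. Then I would pull these sets back to the path space by setting $B_n:=\pi_{t_o}^{-1}(A_n)\subset\OO_{\mathcal{T}}$. As the $A_n$ are disjoint with union $\XX$ and $\pi_{t_o}$ is measurable, the family $(B_n)_{n\ge1}$ is a countable measurable partition of $\OO_{\mathcal{T}}$.

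The only computation is that each $B_n$ carries finite mass, which is just functoriality of the pushforward together with the factorisation $\pi_{t_o}\circ X_{\mathcal{T}}=X_{t_o}$:
$$
Q_{\mathcal{T}}(B_n)=(X_{\mathcal{T}})\pf Q\big(\pi_{t_o}^{-1}(A_n)\big)=Q\big((\pi_{t_o}\circ X_{\mathcal{T}})^{-1}(A_n)\big)=Q\big(X_{t_o}^{-1}(A_n)\big)=Q_{t_o}(A_n)<\infty.
$$
This exhibits $(B_n)_{n\ge1}$ as a $\sigma$-finite partition of $Q_{\mathcal{T}}$, which is the claim.

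I do not anticipate any genuine obstacle: $Q_{t_o}$ is itself the pushforward of $Q_{\mathcal{T}}$ under $\pi_{t_o}$, so $Q_{\mathcal{T}}$ sits above $Q_{t_o}$ and the finite-mass partition witnessing $\sigma$-finiteness of $Q_{t_o}$ simply pulls back. The single point requiring care is that the distinguished time $t_o$ must lie in $\mathcal{T}$: this is exactly what makes $\pi_{t_o}$ factor through $X_{\mathcal{T}}$ (equivalently, makes $Q_{t_o}$ a marginal of $Q_{\mathcal{T}}$), and without it the argument would have nothing to rest on.
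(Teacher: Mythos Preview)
Your proof is correct and follows essentially the same route as the paper: pull back a $\sigma$-finite cover of $\XX$ for $Q_{t_o}$ through the coordinate at $t_o$ to obtain a $\sigma$-finite cover of $\OO_{\mathcal{T}}$ for $Q_{\mathcal{T}}$, using the identity $Q_{\mathcal{T}}(X_{t_o}\in A)=Q_{t_o}(A)$. The only cosmetic difference is that the paper uses an increasing exhaustion of $\XX$ while you use a disjoint partition, and you spell out the factorisation $\pi_{t_o}\circ X_{\mathcal{T}}=X_{t_o}$ more explicitly.
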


\begin{proof}
 Let $t_o\in
\mathcal{T}$ be such that $Q_{t_o}\in\MX$ is a $\sigma$-finite measure with
$\seq{\XX}n$ an increasing sequence of measurable sets such that
$Q_{t_o}(\XX_n)<\infty$ and $\cup\XX_n=\XX.$ Then,
$Q_{\mathcal{T}}$ is also $\sigma$-finite, since
$Q_{\mathcal{T}}(X_{t_o}\in\XX_n)=Q_{t_o}(\XX_n)$ for all $n$ and
$\cup_{n\ge1}
[\Omega_\mathcal{T}\cap\{X_{t_o}\in\XX_n\}]=\Omega_\mathcal{T}.$
\end{proof}

\begin{definitions}[Conditionable path measure]\label{def-06}\
\begin{enumerate}
    \item A positive measure  $Q\in\MO$ is called a path measure.

    \item  The path measure $Q\in\MO$ is said to  be \emph{conditionable} if for all $t\in\ii,$  $Q_t$
is a $\sigma$-finite measure on $\XX.$
\end{enumerate}
\end{definitions}

 With Lemma \ref{res-01}, for any  conditionable path measure $Q\in\MO,$ the conditional expectation $E_Q(\cdot\mid X_\mathcal{T})$ is well-defined for any $\mathcal{T}\subset\ii.$ This is the reason for this definition.

Even when $Q(\OO)=\infty$,   Proposition
\ref{res-LZ06} tells us that $Q(\cdot\mid X_\mathcal{T})$ is a probability measure. In particular, $Q(B\mid X_\mathcal{T})$ and
$E_Q(b\mid X_\mathcal{T})$ are bounded measurable functions for any measurable subset $B$ and any measurable bounded function $b.$

\begin{example}
Let  $Q\in\MO$ the law of the
real-valued  process $X$ such that for all $0\le t<1$, $X_t=X_0$  is distributed with Lebesgue measure and $X_1=0,$ $Q$-almost  everywhere. We see with Lemma \ref{res-01} that
$Q=Q_{01}$ is a $\sigma$-finite measure since $Q_0$ is $\sigma$-finite. But $Q_1$ is not a 
$\sigma$-finite measure. Consequently, $Q$ is not a 
conditionable path measure.
\end{example}

\begin{definition}[Markov measure]
The path measure $Q\in\MO$ is said to be Markov if it is conditionable in the sense of Definition \ref{def-06} and if for all $0\le t\le1$ 
$$
Q(X _{[t,1]}\in\cdot\mid X _{[0,t]})=Q(X _{[t,1]}\in\cdot \mid X_t).
$$
\end{definition}

\section{Relative entropy with respect to an unbounded measure}\label{sec-entropy}

Let $R\in\MO$ be some $\sigma$-finite positive measure on some measurable  space $\OO$. The relative entropy of the probability measure $P\in\PO$ with respect to $R$ is loosely defined by
\begin{equation*}
H(P|R):=\IO \log(dP/dR)\, dP\in (-\infty,\infty],\qquad P\in \PO
\end{equation*}
if $P\ll R$ and $H(P|R)=\infty$ otherwise. 
\\
In the special case where $R$ is a probability measure, this definition is meaningful.

\begin{lemma}\label{res-RE03}
We assume that  $R\in\PO$ is a probability measure.
\\
We have for all $P\in\PO,$
$H(P|R)\in[0,\infty]$ and $H(P|R)=0$ if and only if $P=R.$
\\
The function $H(\cdot|R)$ is strictly convex on the convex set $\PO.$
\end{lemma}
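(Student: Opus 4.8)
The plan is to prove three assertions about $H(\cdot|R)$ when $R\in\PO$ is a probability measure: nonnegativity (with the value $0$ attained exactly at $P=R$) and strict convexity. The natural engine for all three is \emph{Jensen's inequality} applied to the strictly convex function $j(s):=s\log s$ (with $j(0)=0$), together with the case of equality in Jensen's inequality, which requires the strict convexity of $j$.

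For nonnegativity, I would first note that since $R$ is a probability measure we may integrate against $R$ and use that $P\ll R$ in the only nontrivial case. Writing $\rho:=dP/dR$, I would observe
\begin{equation*}
H(P|R)=\IO \log\rho\,dP=\IO \rho\log\rho\,dR=\IO j(\rho)\,dR.
\end{equation*}
Since $R$ is a probability measure and $\EE_R(\rho)=P(\OO)=1$, Jensen's inequality gives $\IO j(\rho)\,dR\ge j\big(\IO \rho\,dR\big)=j(1)=0$, so $H(P|R)\ge0$. For the equality case, strict convexity of $j$ forces the equality in Jensen to hold only when $\rho$ is $R$-a.s. constant; since $\EE_R(\rho)=1$, that constant is $1$, i.e.\ $\rho=1$ $R$-a.s., which is exactly $P=R$. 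Conversely $P=R$ gives $\rho\equiv 1$ and $H(R|R)=0$ directly.

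For strict convexity of $H(\cdot|R)$ on $\PO$, I would take $P_0,P_1\in\PO$ with $P_0\ne P_1$ and $\lambda\in(0,1)$, and show $H(P_\lambda|R)<\lambda H(P_0|R)+(1-\lambda)H(P_1|R)$ where $P_\lambda:=(1-\lambda)P_0+\lambda P_1$. The efficient route is to reduce convexity of $H(\cdot|R)$ to convexity of the integrand: if both entropies on the right are finite then both $P_0,P_1\ll R$, hence $P_\lambda\ll R$ with density $\rho_\lambda=(1-\lambda)\rho_0+\lambda\rho_1$, and the pointwise strict convexity of $j$ gives $j(\rho_\lambda)\le(1-\lambda)j(\rho_0)+\lambda j(\rho_1)$ with strict inequality on the set $\{\rho_0\ne\rho_1\}$. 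Integrating against $R$ yields the convexity inequality, and the inequality is strict because $\{\rho_0\ne\rho_1\}$ has positive $R$-measure (otherwise $P_0=P_1$). If instead one of the endpoint entropies is $+\infty$, the convexity inequality holds trivially, so it suffices to rule out equality, which again follows from the strict-inequality set having positive measure in the finite case.

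The main obstacle is \emph{integrability bookkeeping}, not the convexity idea itself. The function $j(s)=s\log s$ is negative on $(0,1)$, so $\IO j(\rho)\,dR$ is a priori an integral of a sign-changing function, and one must check that the negative part is integrable so that the integral is well defined in $(-\infty,\infty]$ and Jensen applies. I would handle this by bounding the negative part: $j(s)\ge -1/e$ for all $s\ge0$, so the negative part of $j(\rho)$ is bounded below by the constant $-1/e$, which is $R$-integrable since $R(\OO)=1$; this makes $\IO j(\rho)\,dR$ well defined and legitimizes both Jensen's inequality and the integration of the pointwise convexity estimate. With this uniform lower bound in hand, the remaining steps are the routine invocation of Jensen and its equality case, and the only genuine care needed is to track when the relevant densities exist (i.e.\ the absolute-continuity hypotheses implicit in finiteness of the entropies).
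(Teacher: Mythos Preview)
Your argument is correct, but the paper takes a slightly different and more streamlined route. Instead of applying Jensen's inequality to $j(s)=s\log s$ and tracking the negative part via the bound $j\ge -1/e$, the paper rewrites the integrand using $h(a):=a\log a-a+1$ (with $h(0)=1$), observing that $H(P|R)=\IO h(\rho)\,dR$ because $\IO(\rho-1)\,dR=0$ when $P,R\in\PO$. Since $h\ge 0$ everywhere and $h(a)=0$ iff $a=1$, nonnegativity and the equality case are immediate, with no separate integrability bookkeeping needed; strict convexity then follows from the strict convexity of $h$, exactly as in your argument with $j$. The two approaches are of course equivalent in spirit---$h$ is just $j$ minus its tangent line at $a=1$, which is the pointwise form of Jensen---but the paper's choice of integrand makes the integral manifestly well defined in $[0,\infty]$ from the outset, whereas your route requires the extra (though easy) step of bounding $j_-$.
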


\begin{proof}
We have $H(P|R)=\IO h\left(\frac{dP}{dR}\right)\,dR$ with $h(a)=a\log
a-a+1$ if $a>0$ and $h(0)=1.$ As $h\ge0,$  we see that for any $P\in\PO$ such that $P\ll R,$  $H(P|R)=\IO
h\left(\frac{dP}{dR}\right)\,dR\ge0.$ Hence
$H(P|R)\in[0,\infty].$  Moreover, $h(a)=0$ if and only if
$a=1.$ Therefore, $H(P|R)=0$ if and only if $P=R.$
\\
The strict convexity of $H(\cdot|R)$ follows from the strict convexity of $h.$
\end{proof}

If $R$ is unbounded, one must restrict the definition of $H(\cdot|R)$ to some subset of $\PO$ as follows. As $R$ is assumed to be $\sigma$-finite, there exists some  measurable function $W:\OO\to [0,\infty)$ such that
\begin{equation}\label{eq-Sc02}
z_W:=\IO e ^{-W}\, dR<\infty.
\end{equation}
Define the probability measure $R_W:= z_W ^{-1}e ^{-W}\,R$ so that $\log(dP/dR)=\log(dP/dR_W)-W-\log z_W.$ It follows that for any $P\in \PO$ satisfying $\IO W\, dP<\infty,$ the formula 
\begin{equation*}
H(P|R):=H(P|R_W)-\IO W\,dP-\log z_W\in (-\infty,\infty]
\end{equation*}
is a meaningful definition of the relative entropy which is coherent in the following sense. If $\IO W'\,dP<\infty$ for another measurable function $W':\OO\to[0,\infty)$ such that $z_{W'}<\infty,$ then $H(P|R_W)-\IO W\,dP-\log z_W=H(P|R _{W'})-\IO W'\,dP-\log z_{W'}\in (-\infty,\infty]$.
\\
Therefore, $H(P|R)$ is well-defined for any $P\in \PO$ such that $\IO W\,dp<\infty$ for some measurable nonnegative function $W$ verifying \eqref{eq-Sc02}. For any such function, let us define
$$
\mathrm{P}_W(\OO):=\left\{P\in\PO; \IO W\,dP<\infty\right\}.
$$ and $B_W(\OO)$ the space of measurable functions $u:\OO\to\RR$ such that $\sup_\OO |u|/(1+W)<\infty.$
When $\OO$ is a topological space, we also define the space $C_W(\OO)$ of all  continuous functions on $\OO$ such that $\sup_\OO |u|/(1+W)<\infty$.

\begin{proposition}\label{resL-13}
Let $W$ be some function which satisfies \eqref{eq-Sc02}.
For all $P\in \mathrm{P}_W(\OO)$,
\begin{equation}\label{eqAp-c}
\begin{split}
  H(P|R)&= \sup\left\{\int u\,dP-\log\int e^u\,dR; u\in B_W(\OO)\right\}\\
        &= \sup\left\{\int u\,dP-\log\int e^u\,dR; u\in C_W(\OO)\right\}
\end{split}
\end{equation}
and for all $P\in\PO$ \textrm{ such that } $P\ll R,$
\begin{equation}\label{eqAp-d}
    H(P|R) =\sup\left\{\int u\,dP-\log\int e^u\,dR; u:  \int
    e^{u}\,dR<\infty, \int u_-\,dP<\infty
    \right\}
\end{equation}
where $u_-=(-u)\vee 0$ and $\int u\,dP\in(-\infty,\infty]$ is
well-defined for all $u$ such that $\int u_-\,dP<\infty.$
\end{proposition}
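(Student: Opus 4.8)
The plan is to build every supremum in \eqref{eqAp-c} and \eqref{eqAp-d} around one ``Gibbs'' inequality and one explicit family of near-optimizers, reducing the continuous case to the classical Donsker--Varadhan identity for the \emph{probability} measure $R_W=z_W^{-1}e^{-W}R$. Throughout I write $\rho:=dP/dR$ (which exists since $R$ is $\sigma$-finite and $P\ll R$) and recall that on $\mathrm{P}_W(\OO)$ the decomposition $H(P|R)=H(P|R_W)-\int W\,dP-\log z_W$ makes $H(P|R)=\int\log\rho\,dP$ coincide with the earlier definition. The upper bound is uniform: for any $P\ll R$ and any measurable $u$ with $\int e^u\,dR<\infty$ and $\int u_-\,dP<\infty$, I would apply the Fenchel--Young inequality $ab\le e^a+(b\log b-b)$ pointwise to $a=u-\log\int e^u\,dR$ and $b=\rho$ and integrate against $R$, obtaining
\[
\int u\,dP-\log\int e^u\,dR\le H(P|R);
\]
indeed the normalization gives $\int e^a\,dR=1$, the conjugate term integrates to $H(P|R)-1$, and $\int u_-\,dP<\infty$ makes $\int a\,dP$ well defined in $(-\infty,\infty]$. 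Since for $P\in\mathrm{P}_W(\OO)$ every $u\in B_W(\OO)$ (a fortiori every $u\in C_W(\OO)$) with $\int e^u\,dR<\infty$ satisfies $\int u_-\,dP\le\int C(1+W)\,dP<\infty$, and since any $u$ with $\int e^u\,dR=\infty$ contributes only $-\infty$, this single inequality yields ``$\le$'' in all three formulas at once.

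For the reverse inequality I would use the formal optimizer $u^\star=\log\rho$, which gives exact equality $\int u^\star\,dP-\log\int e^{u^\star}\,dR=H(P|R)$ but equals $-\infty$ where $\rho=0$; the point is to truncate it without destroying $\int e^{u}\,dR<\infty$ on the possibly infinite set $\{\rho=0\}$. Cutting from below by a constant fails there, so I set
\[
u_n:=\min\big\{\max\{\log\rho,\,-W-n\},\,n\big\},\qquad n\ge1,
\]
so that $-W-n\le u_n\le n$, whence $u_n\in B_W(\OO)$. One checks $e^{u_n}\le\rho+e^{-W}\in L^1(R)$, so dominated convergence gives $\int e^{u_n}\,dR\to\int\rho\,dR=1$; and since $(u_n)_+=\min\{(\log\rho)_+,n\}\uparrow(\log\rho)_+$ and $(u_n)_-=\min\{(\log\rho)_-,W+n\}\uparrow(\log\rho)_-$, two applications of monotone convergence give $\int u_n\,dP\to\int\log\rho\,dP=H(P|R)$ whenever $\int(\log\rho)_-\,dP<\infty$, i.e. whenever $H(P|R)\in(-\infty,\infty]$. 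Thus $\int u_n\,dP-\log\int e^{u_n}\,dR\to H(P|R)$, proving ``$\ge$'' for the admissible class of \eqref{eqAp-d} and, since $u_n\in B_W(\OO)$, for the $B_W$-supremum in \eqref{eqAp-c}. Combined with the upper bound this already gives \eqref{eqAp-d} for every $P\ll R$ with $H(P|R)>-\infty$, and the $B_W$ half of \eqref{eqAp-c} for $P\in\mathrm{P}_W(\OO)$.

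It remains to treat the $C_W(\OO)$-supremum, and this is where I expect the real difficulty. As $C_W(\OO)\subset B_W(\OO)$, only ``$\ge$'' is missing, and the obstruction is that the near-optimizers $u_n$ are merely measurable: approximating them by continuous functions is easy in $L^1(P)$, but the unbounded-below cutoff needed to keep $\int e^{u}\,dR$ finite against the infinite measure $R$ must be preserved at the same time. The natural route is the change of variable $v=u+W$, which turns the functional into $\int v\,dP-\log\int e^v\,dR_W$ up to the additive constant $-\int W\,dP-\log z_W$; for the probability measure $R_W$ on the Polish space $\OO$ the classical Donsker--Varadhan theorem already provides near-optimizers in $C_b(\OO)$. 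When $W$ is continuous this closes the argument at once, since then $u=v-W\in C_W(\OO)$ for $v\in C_b(\OO)$ and the additive constant matches the decomposition of $H(P|R)$. For merely measurable $W$ one cannot shift back continuously, and I would instead regularize $u_n$ directly, exploiting the Radon regularity of the finite Borel measures $P$ and $e^{-W}R$ on $\OO$ (a Lusin-type argument) to produce $\widetilde u_n\in C_W(\OO)$ with $\int\widetilde u_n\,dP\to\int u_n\,dP$ and $\int e^{\widetilde u_n}\,dR\to\int e^{u_n}\,dR$; this may require some regularity of $W$ (e.g. lower semicontinuity) to ensure $C_W(\OO)$ is rich enough. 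Carrying out this joint approximation---continuity together with control of $\int e^{u}\,dR$ over the infinite part of $R$---is the main technical obstacle; once it is in place, the inclusions $C_W(\OO)\subset B_W(\OO)\subset\{\,\int e^u\,dR<\infty,\ \int u_-\,dP<\infty\,\}$ squeeze all three suprema to the common value $H(P|R)$.
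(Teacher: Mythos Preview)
Your argument follows the same Fenchel--Young/approximation scheme as the paper but executes it differently and, in one place, more carefully. The paper first proves the variational formula for the \emph{linearized} functional $\int u\,dP-\int e^{u-1}\,dR$ and only afterwards converts to $\log\int e^u\,dR$ via the identity $\log a=\inf_{b}\{ae^{b-1}-b\}$ together with the unit mass of $P$; you normalize $u$ from the start and obtain the logarithmic form in one stroke, which is shorter. More substantively, the paper's near-optimizers are $u_n=1+\log(\rho\vee e^{-n})$, with a constant lower bound $1-n$, whereas yours use the $W$-dependent floor $-W-n$. When $R$ is unbounded---the whole point of this section---the paper's choice can give $\int e^{u_n-1}\,dR=\int(\rho\vee e^{-n})\,dR=\infty$ (take $R$ Lebesgue on $\RR$ and $P$ standard Gaussian: then $R(\rho<e^{-n})=\infty$ for every $n$), so your truncation is the one that actually keeps the approximants inside the admissible class; as a bonus they lie in $B_W(\OO)$, so the same sequence handles \eqref{eqAp-d} and the $B_W$-half of \eqref{eqAp-c} simultaneously. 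For the $C_W$-half the paper says only ``standard approximation arguments''; your reduction $v=u+W$ to the classical Donsker--Varadhan identity for the probability $R_W$ is the right idea and, as you correctly observe, finishes the proof immediately when $W$ is continuous, while for merely measurable $W$ both your sketch and the paper leave the details open.
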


In \eqref{eqAp-c}, when $C_W(\OO)$ is invoked, it implicitly assumed that $\OO$ is a topological space equipped with its Borel $\sigma$-field.

The proof below is mainly a rewriting of the proof of \cite[Prop.\,B.1]{GL10} in the setting where the reference measure is possibly unbounded.

\begin{proof}[Proof of Proposition \ref{resL-13}] Once we
have \eqref{eqAp-d}, \eqref{eqAp-c} follows by standard
approximation arguments.
\\
The proof of \eqref{eqAp-d} relies on Fenchel inequality for the
convex function $h(t)=t\log t$: 
$$
st\le t\log t+e ^{s-1}
$$ 
for all $s\in[-\infty,\infty),$ $t\in[0,\infty),$
with the conventions $0\log 0=0,$ $e^{-\infty}=0$ and
$-\infty\times 0=0$ which are legitimated by limiting procedures.
The equality is attained when $t=e ^{s-1}.$
\\
Taking $s=u(x),$  $t=\frac{dP}{dR}(x)$ and integrating with
respect to $R$ leads us to 
$$
\int u\,dP\le H(P|R)+\int e ^{u-1}\,dR,
$$ 
whose terms are meaningful with
values in $(-\infty,\infty],$ provided that $\int
u_-\,dP<\infty$ and $\IO e^u\, dR<\infty.$ Formally, the case of equality corresponds to
$\frac{dP}{dR}=e ^{u-1}.$ With the monotone convergence theorem,
one sees that it is approached by the sequence $u_n=1+
\log(\frac{dP}{dR}\vee e^{-n}),$ as $n$ tends to infinity.
This gives us 
$$H(P|R)=\sup\left\{\int u\,dP-\int
e ^{u-1}\,dR; u: \int e^{u}\,dR<\infty,\inf u>-\infty
\right\},$$ 
which in turn implies that
\begin{equation*}
    H(P|R)=\sup\left\{\int u\,dP-\int e ^{u-1}\,dR; u: \int
e^{u}\,dR<\infty,\int u_-\,dP<\infty \right\}.
\end{equation*}
Now, we take advantage of the unit mass of $P\in\PO:$ 
$$\int
(u+b)\,dP-\int e^{u+b-1}\,dR=\int u\,dP-e ^{b-1}\int
e^u\,dR +b,\quad \forall b\in \RR,
$$ 
and we use the easy
identity $\log a=\inf_{b\in\RR}\{ae ^{b-1}-b\}$ to obtain
    $$\sup_{b\in\RR}\left\{\int (u+b)\,dP-\int
e^{u+b-1}\,dR\right\}=\int u\,dP-\log\int e^u\,dR.$$
Whence,
\begin{eqnarray*}
  &&\sup\left\{\int u\,dP-\int e^{u-1}\,dR; u: \int e^{u}\,dR<\infty,\int u_-\,dP<\infty
  \right\}\\
    &=& \sup\left\{\int (u+b)\,dP-\int e^{u+b-1}\,dR;b\in\RR,  u: \int e^{u}\,dR<\infty,\int u_-\,dP<\infty \right\}\\
  &=& \sup\left\{\int u\,dP-\log\int e^u\,dR; u: \int e^{u}\,dR<\infty,\int u_-\,dP<\infty
  \right\}.
\end{eqnarray*}
This completes the proof of \eqref{eqAp-d}.
\end{proof}

Let $W$ be a nonnegative measurable function on $\OO$ that verifies \eqref{eq-Sc02}.
Let us introduce the space $\mathrm{M}_W(\OO)$ of all signed measures $Q$ on $\OO$ such that $\IO W\,d|Q|<\infty.$

\begin{corollary}\label{res-10}

The function $H(\cdot|R)$ is convex on the vector space of all  signed measures. Its effective domain $\dom H(\cdot|R):=\left\{H(\cdot|R)<\infty\right\} $ is included in $\mathrm{P}_W(R)$

Suppose furthermore that $\OO$ is a topological space. Then, $H(\cdot|R)$ is \lsc\ with respect to the topology $\sigma(\mathrm{M}_W(\OO),C_W(\OO)).$

As a function of its two arguments  on $\mathrm{M}_W(\OO)\ttimes\mathrm{M}_W(\OO),$
$H(\cdot\mid \cdot)$ is jointly convex and jointly \lsc\ with respect to the product topology. In particular, it is a jointly Borel function.
\end{corollary}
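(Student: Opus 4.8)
\emph{Strategy.} Throughout, the engine is Proposition \ref{resL-13}, which exhibits $H(\cdot|R)$ as a supremum of functionals that are affine in the measure argument. The plan is to read off convexity from "supremum of affine", lower semicontinuity from "supremum of continuous", and to choose the test--function class in each of the two settings (one argument versus two) so that these functionals have the correct topological behaviour.

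\emph{Convexity, properness and effective domain.} First I would fix $R$ and work on $\mathrm{P}_W(\OO)$, where \eqref{eqAp-c} identifies $H(\cdot|R)$ with $\sup\{P\mapsto \int u\,dP-\log\int e^u\,dR;\ u\in B_W(\OO)\}$; since $R$ is fixed, each such map is affine in $P$, so $H(\cdot|R)$ is convex on $\mathrm{P}_W(\OO)$. Extending it by $+\infty$ to every signed measure outside $\mathrm{P}_W(\OO)$ preserves convexity, because $\mathrm{P}_W(\OO)$ is a convex subset of the vector space $\mathrm{M}_W(\OO)$; properness ($H>-\infty$) follows from $H(P|R)=H(P|R_W)-\int W\,dP-\log z_W\ge -\int W\,dP-\log z_W>-\infty$ on $\mathrm{P}_W(\OO)$, using $H(\cdot|R_W)\ge0$ from Lemma \ref{res-RE03}. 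The inclusion $\dom H(\cdot|R)\subseteq \mathrm{P}_W(\OO)$ is then built into the construction; to confirm that the variational formula itself discards non--probability measures, I would test a $Q\in\mathrm{M}_W(\OO)$ with $u=-W+c$, noting $\int(-W+c)\,dQ-\log\int e^{-W+c}\,dR=c(Q(\OO)-1)-\int W\,dQ-\log z_W\to+\infty$ as $c\to\pm\infty$ when $Q(\OO)\neq1$, and with $u=-W-M\1_A$ on a set $A$ carrying negative mass (letting $M\to\infty$), both choices lying in $B_W(\OO)$ and keeping $\int e^u\,dR\le z_W<\infty$.

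\emph{Single--variable lower semicontinuity.} For the topology $\sigma(\mathrm{M}_W(\OO),C_W(\OO))$ I would switch to the $C_W(\OO)$ form of \eqref{eqAp-c}. With $R$ fixed, each functional $Q\mapsto \int u\,dQ-\log\int e^u\,dR$, $u\in C_W(\OO)$, is continuous \emph{by the very definition} of this weak topology, so their supremum is lower semicontinuous. It then remains to see that the $C_W$--supremum is still $+\infty$ off $\mathrm{P}_W(\OO)$, this time using only continuous test functions: there exists $u_0\in C_W(\OO)$ with $\int e^{u_0}\,dR<\infty$ (otherwise the right--hand side of \eqref{eqAp-c} would be identically $-\infty$), whence $u_0+c$ detects a wrong mass as above, while $u_0-\lambda h$ with $h\in\mathrm{C}_b(\OO)$, $h\ge0$, $\int h\,dQ<0$ detects a negative part (such an $h$ exists on a Polish space whenever $Q$ fails to be nonnegative, and $e^{u_0-\lambda h}\le e^{u_0}$ keeps the $R$--integral finite).

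\emph{Joint statement and the main obstacle.} For the two--argument statement both measures lie in $\mathrm{M}_W(\OO)$, so the reference measure is now \emph{finite}; this is exactly what makes joint lower semicontinuity work, and I expect it to be the delicate point. The plan is to represent, for finite positive $R$ and $P\in\mathrm{P}_W(\OO)$,
$$H(P|R)=\sup_{g\in\mathrm{C}_b(\OO)}\Big[\int g\,dP-\log\int e^g\,dR\Big],$$
the Donsker--Varadhan formula with \emph{bounded} continuous test functions, which follows from \eqref{eqAp-c} by the further approximation that the finiteness of $R$ permits. Each summand is jointly convex (linear in $P$, and $-\log$ of a linear function of $R$, hence convex in $R$) and jointly lower semicontinuous: for $g\in\mathrm{C}_b(\OO)$ both $g$ and $e^g$ lie in $\mathrm{C}_b(\OO)\subseteq C_W(\OO)$, so $P\mapsto\int g\,dP$ and $R\mapsto\int e^g\,dR$ are $\sigma(\mathrm{M}_W(\OO),C_W(\OO))$--continuous, and $R\mapsto -\log\int e^g\,dR$ is lower semicontinuous as the composite of the convex lower semicontinuous function $-\log$ (set to $+\infty$ on $(-\infty,0]$) with a continuous linear map. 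A supremum of jointly convex, jointly lower semicontinuous functions has the same properties, so it only remains to verify the coincidence with $H(\cdot\mid\cdot)$ off the effective domain: constants $g\equiv c\to\pm\infty$ force $+\infty$ when $P(\OO)\neq1$; $g=-\lambda h$ with $h\in\mathrm{C}_b(\OO)$, $h\ge0$, $\int h\,dP<0$ handles a negative part of $P$; and a $g$ large where $R$ is negative makes $\int e^g\,dR\le0$, sending the term to $+\infty$. Joint Borel measurability then follows since a lower semicontinuous function has closed, hence Borel, sublevel sets. The one genuinely subtle step is the choice of test functions: the class $C_W(\OO)$ inherited from Proposition \ref{resL-13} is too large, since then $e^u\notin C_W(\OO)$ in general, $R\mapsto\int e^u\,dR$ is merely lower semicontinuous and $R\mapsto-\log\int e^u\,dR$ only \emph{upper} semicontinuous — the wrong direction; passing to bounded $g$ repairs this, at the cost of having to justify that the smaller class still represents $H$, which is legitimate precisely because $R$ is finite in the joint setting. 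Everything else is convex--analytic bookkeeping on top of Proposition \ref{resL-13}.
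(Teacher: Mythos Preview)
Your approach is essentially the paper's: both rest on Proposition~\ref{resL-13} to write $H$ as a pointwise supremum of affine (in $P$) or convex (jointly) functionals and read off convexity and lower semicontinuity from this. The paper's own proof is three sentences long, ending with ``the same argument works with the joint arguments,'' so on the level of strategy there is nothing to compare.

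Where you go further is exactly where the paper is terse. You correctly notice that for the joint statement the map $R\mapsto -\log\int e^{u}\,dR$ is \emph{not} lower semicontinuous for $u\in C_W(\OO)$ in general (since $e^{u}\notin C_W(\OO)$, one only gets upper semicontinuity), and you repair this by restricting to bounded continuous test functions, which is legitimate because signed measures in $\mathrm{M}_W(\OO)$ are automatically bounded. This is a genuine refinement of the paper's argument rather than a different route; the paper's ``same argument'' glosses over precisely this point. Your verification that the variational supremum is $+\infty$ off $\mathrm{P}_W(\OO)$ is likewise absent from the paper but needed for the lower semicontinuity claim to extend to all of $\mathrm{M}_W(\OO)$. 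One small caveat: your detection of a negative part via a nonnegative $h\in\mathrm{C}_b(\OO)$ with $\int h\,dQ<0$ tacitly uses more than ``$\OO$ is a topological space'' (you invoke Polishness); this is harmless in the paper's intended path-space setting but goes slightly beyond the hypotheses as literally stated in the corollary.
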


\begin{proof}
The first statement follows from \eqref{eqAp-c}.
\\
With Proposition \ref{resL-13}, we see that $H(\cdot|R)$ is the supremum of a family of affine continuous functions: $Q\mapsto \IO u\,dQ-\log\IO e^u\,dR$ indexed by $u.$
Hence, it is convex and \lsc. The same argument works with the joint arguments.
\end{proof}

Let $\OO$ and $Z$ be two  Polish spaces equipped with their Borel $\sigma$-fields. For any measurable function $\phi:\OO\to Z$ and any measure $Q\in \MO$ we have the  disintegration formula
\begin{equation*}
Q(\cdot)=\int _{Z} Q(\cdot\mid\phi=z)\, Q_\phi(dz)
\end{equation*}
where we write $Q_\phi:=\phi\pf Q$ and  $z\in Z\mapsto Q(\cdot|\phi=z)\in \PO$ is measurable.

\begin{theorem}[Additive property  of the relative entropy]\label{res-02}
We have
$$
H(P|R)=H(P_\phi|R_\phi)+\int_Z
H\Big(P(\cdot\mid\phi=z)\Big|R(\cdot\mid\phi=z)\Big)\,P_\phi(dz),\quad P\in\PO.
$$
\end{theorem}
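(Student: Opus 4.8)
The plan is to reduce the identity to the pointwise \emph{chain rule} for Radon-Nikodym derivatives supplied by \eqref{eq-LZ53}, and then to integrate its logarithm against $P$. I first dispose of the degenerate case $P\not\ll R$, for which $H(P|R)=\infty$ by definition. Reconstructing $P$ from the two disintegrations $R(\cdot)=\int_Z R(\cdot\mid\phi=z)\,R_\phi(dz)$ and $P(\cdot)=\int_Z P(\cdot\mid\phi=z)\,P_\phi(dz)$ shows that $P_\phi\ll R_\phi$ together with $P(\cdot\mid\phi=z)\ll R(\cdot\mid\phi=z)$ for $P_\phi$-almost every $z$ would force $P\ll R$; hence the failure of $P\ll R$ entails the failure of at least one of these relations, making either $H(P_\phi|R_\phi)=\infty$ or the conditional entropy integral equal to $\infty$, so the right-hand side is $\infty=H(P|R)$. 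From now on I assume $P\ll R$, with $H(P|R)$ well-defined in $(-\infty,\infty]$ through the standing $W$-construction.

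In the absolutely continuous case, Theorem \ref{res-LZ05} provides the regular conditional probability kernels $z\mapsto R(\cdot\mid\phi=z)$ and $z\mapsto P(\cdot\mid\phi=z)$, the relation $P_\phi\prec R_\phi$, and the factorization \eqref{eq-LZ53}:
\begin{equation*}
\frac{dP}{dR}(\omega)=\frac{dP_\phi}{dR_\phi}(\phi(\omega))\,\frac{dP(\cdot\mid\phi=\phi(\omega))}{dR(\cdot\mid\phi=\phi(\omega))}(\omega),\quad P\ae
\end{equation*}
Taking logarithms and integrating against $P$ splits $H(P|R)=\int_\OO\log(dP/dR)\,dP$ into
\begin{equation*}
\int_\OO\log\frac{dP_\phi}{dR_\phi}(\phi(\omega))\,dP(\omega)+\int_\OO\log\frac{dP(\cdot\mid\phi=\phi(\omega))}{dR(\cdot\mid\phi=\phi(\omega))}(\omega)\,dP(\omega).
\end{equation*}
The first integral equals $\int_Z\log(dP_\phi/dR_\phi)\,dP_\phi=H(P_\phi|R_\phi)$ by the change-of-variables formula for the push-forward $P_\phi=\phi\pf P$. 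For the second, I disintegrate $P$ along $\phi$: since $\phi(\omega)=z$ holds $P(\cdot\mid\phi=z)$-almost surely, the inner integral over $\OO$ is exactly $H\big(P(\cdot\mid\phi=z)\big|R(\cdot\mid\phi=z)\big)$, and Fubini's theorem for the disintegration produces $\int_Z H\big(P(\cdot\mid\phi=z)\big|R(\cdot\mid\phi=z)\big)\,P_\phi(dz)$.

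The delicate point, which I expect to be the main obstacle, is to justify the splitting of the integral and the use of Fubini when $R$ is unbounded, so that no $\infty-\infty$ ambiguity arises. The crucial observation is that, by Theorem \ref{res-LZ05}(a), each conditional reference measure $R(\cdot\mid\phi=z)$ is a genuine probability measure. Hence Lemma \ref{res-RE03} applies fibrewise, so every conditional entropy is nonnegative; moreover, writing $g=dP(\cdot\mid\phi=z)/dR(\cdot\mid\phi=z)$ and using the elementary bound $g\log g\ge -1/e$ one gets $\int(\log g)_-\,dP(\cdot\mid\phi=z)=\int(g\log g)_-\,dR(\cdot\mid\phi=z)\le 1/e$ uniformly in $z$. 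Integrating in $z$ shows that the negative part of the conditional log-density is $P$-integrable, so the second integral is unambiguously valued in $[0,\infty]$ and Tonelli applies to it; measurability of $z\mapsto H\big(P(\cdot\mid\phi=z)\big|R(\cdot\mid\phi=z)\big)$ follows from the joint Borel measurability of $H(\cdot\mid\cdot)$ recorded in Corollary \ref{res-10} together with the measurability of the disintegration kernels. Since the second term lies in $[0,\infty]$ while the first, $H(P_\phi|R_\phi)$, lies in $(-\infty,\infty]$, their sum is unambiguous and the splitting is legitimate, which completes the identification of both sides.
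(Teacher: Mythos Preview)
Your proof is correct and follows essentially the same route as the paper's: both invoke the chain rule \eqref{eq-LZ53} from Theorem \ref{res-LZ05}, take logarithms, and disintegrate the resulting integral along $\phi$. You are in fact more careful than the paper, which relegates the measurability justification to a remark and does not explicitly treat the degenerate case $P\not\ll R$ or the $\infty-\infty$ issue that you handle via the fibrewise bound $(g\log g)_-\le 1/e$.
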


\begin{proof}
By Theorem \ref{res-LZ05}, 
\begin{eqnarray*}
&&H(P|R)
 = \int_Z E_P\left[\log(\frac{dP}{dR})\mid\phi=z\right]\,P_\phi(dz) \\
  &=& \int_Z
  \log\frac{dP_\phi}{dR_\phi}(z)\,P_\phi(dz)
  +\int_Z \left[\int_\Omega\log\frac{dP(\cdot\mid\phi=z)}{dR(\cdot\mid\phi=z)}(\omega)\,P(d\omega\mid\phi=z)\right]\,P_\phi(dz)
\end{eqnarray*}
which is the announced result.
\end{proof}
\begin{remarks} There are serious measurability problems hidden behind this proof.

\begin{enumerate}[(a)]
    \item The assumption that $Z$ is Polish ensures the existence of kernels $z\mapsto P(\cdot\mid\phi=z)$ and $z\mapsto
    R(\cdot\mid\phi=z).$ On the other hand, we know that for any function $u\in
    B_W,$ the mapping $z\in\XX\mapsto E_P(u\mid\phi=z)\in\RR$ is
    measurable. Therefore, the mapping $z\in Z\mapsto
    P(\cdot\mid\phi=z)\in \mathrm{P}_W(\OO)$ is measurable once $\mathrm{P}_W(\OO)$ is equipped with its cylindrical $\sigma$-field, i.e.\,generated by the mappings $Q\in \mathrm{P}_W(\OO)\mapsto \IO
    u\,dQ$ where $u$ describes $B_W.$ But this $\sigma$-field matches with the Borel $\sigma$-field of
    $\sigma(\mathrm{P}_W(\OO),C_W)$ when $\Omega$ is metric and separable.
    As $H$ is jointly Borel (see Corollary \ref{res-10}), it is jointly measurable with respect to the product of the cylindrical $\sigma$-fields. Hence,  $z\mapsto
    H\Big(P(\cdot\mid\phi=z)\Big|R(\cdot\mid\phi=z)\Big)$ is
    measurable.
    \\ Note that in general, the Borel $\sigma$-field of
    $\sigma(\mathrm{P}_W(\OO),B_W)$  is too rich to match with the cylindrical $\sigma$-field. This is the reason why 
    $\Omega$ is assumed to be Polish (completeness doesn't play any role here).

    \item The relative entropy
$H\Big(P(\cdot\mid\phi=z)\Big|R(\cdot\mid\phi=z)\Big)$ inside the second integral of the additive  property formula is a function of couples of probability measures. Therefore, with 
Lemma \ref{res-RE03}, we know that it is nonnegative in general and that it vanishes if and only if $P(\cdot\mid\phi=z)=R(\cdot\mid\phi=z).$
    \item
    Together with its measurability, which was proved at Remak (a) above, this allows us to give a meaning to the integral  $\int_Z
H\Big(P(\cdot\mid\phi=z)\Big|R(\cdot\mid\phi=z)\Big)\,P_\phi(dz)$
in $[0,\infty].$
\end{enumerate}
\end{remarks}

Let us mention an application of this theorem in the context of the Schr\"odinger problem \eqref{eq-03} where $\OO$ is a path space, see \cite{Foe85,Leo12e}. For any, $R\in\MO,$ $P\in\PO,$ we have
\begin{equation*}
H(P|R)=H(P _{01}|R _{01})+\IXX H(P ^{xy}|R ^{xy})\,P _{01}(dxdy)
\end{equation*}
where $Q _{01}:= (X_0,X_1)\pf Q$ is the law of the endpoint position  and $Q ^{xy}:=Q(\cdot|X_0=x,X_1=y)$ is the bridge from $x$ to $y$  under  $Q$. From this additive  property formula and Corollary \ref{res-10}, it is easily seen that the solution $\Ph$ of \eqref{eq-03} (it is unique, since the entropy is \emph{strictly} convex) satisfies
\begin{equation*}
\Ph ^{xy}=R ^{xy},\quad \forall (x,y)\in\XXX, \Ph _{01}\ae
\end{equation*}
and that $\Ph _{01}$ is the unique solution of
\begin{equation*}
H(\pi|R _{01})\to \textrm{min};\qquad \pi\in\PXX:\pi_0=\mu_0, \pi_1=\mu_1
\end{equation*}
where $\pi_0$ and $\pi_1\in\PX$ are the first and second marginals of $\pi\in\PXX.$

\section{Positive integration with respect to a Markov measure}\label{sec-positive}

\subsection*{Integration of nonnegative functions}

The expectation  $E_R Z$ of a \emph{nonnegative} random variable $Z$
with respect to a positive $\sigma$-finite measure $R$ is a well-defined notion, even when $Z$ is not $R$-integrable; in which case, one sets  $E_R Z=+\infty.$ Indeed, with the monotone convergence theorem we have
\begin{equation*}
    E_R Z=\Lim n E_R[\1_{\{\cup_{k\le n}\Omega_k\}}(Z\wedge n)]\in
[0,\infty]
\end{equation*}
where $\seq \Omega k$ is  a $\sigma$-finite partition of  $R.$

Since $R(\cdot\mid \mathcal{A})$ is a bounded measure, we see that
$E_R(Z\mid \mathcal{A})$ is well defined in $[0,\infty].$ Moreover, the fundamental formula of the the conditional expectation is kept:
\begin{equation*}
    E_R[aE_R(Z\mid \mathcal{A})]=E_R(aZ)
\end{equation*}
for any nonnegative  function $a\in \mathcal{A}.$ To see this, denote
$a_n=\1_{\{\cup_{k\le n}\Omega_k\}}(a\wedge n)$ and
$Z_n=\1_{\{\cup_{k\le n}\Omega_k\}}(Z\wedge n).$ We have
$E_R[a_nE_R(Z_n\mid \mathcal{A})]=E_R(a_nZ_n)$ for all $n\ge1.$ Letting  $n$ tend to infinity, we obtain the announced identity with the monotone convergence theorem.

\subsection*{Positive integration with respect to a Markov measure}

We present a technical lemma about positive integration with respect to a Markov measure $R\in\MO.$ It is an easy result, but it is rather practical. It allows to work with $(f,g)$-transforms of Markov processes without assuming unnecessary integrability conditions on $f$ and $g$.

\begin{lemma}\label{res-24} Let $R\in\MO$ be a Markov measure.
\begin{enumerate}[(a)]
    \item Let  $0\le t\le1$ and $\alpha ,\beta $ be nonnegative functions such that $\alpha \in\AAA 0t$ and 
$\beta \in\AAA t1.$ Then, for any $\omega$ outside an $R$-negligible set:
\begin{enumerate}[(i)]
    \item if $E_R(\alpha \beta \mid X_t)(\omega)=0,$ we have $E_R(\alpha \mid X_t)(\omega)=0$ or $E_R(\beta \mid X_t)(\omega)=0;$
    \item if $E_R(\alpha \beta \mid X_t)(\omega)>0,$ we have  $E_R(\alpha \mid
X_t)(\omega),E_R(\beta \mid X_t)(\omega)>0$ and
    $
    E_R(\alpha \beta \mid X_t)(\omega)=E_R(\alpha \mid X_t)(\omega)E_R(\beta \mid
    X_t)(\omega)\in (0,\infty].
    $
\end{enumerate}
    \item Let $P\in\MO$ be a conditionable path measure such that $P\prec R$ and whose density writes as $\displaystyle{\frac{dP}{dR}=\alpha
\beta }$ with $\alpha,\beta$  nonnegative functions such that $\alpha \in\AAA 0t$ and $\beta \in\AAA t1$ for some
$0\le t\le1.$ Then,
$$
\left\{
\begin{array}{l}
  E_R(\alpha \mid X_t),E_R(\beta \mid X_t)\in (0,\infty) \\
  E_R(\alpha \beta \mid X_t)=E_R(\alpha \mid X_t)E_R(\beta \mid
    X_t)\in (0,\infty)\\
\end{array}
    \right.\quad P\ae
$$
(but not  $R\ae$ in general). Furthermore,
\begin{eqnarray}\label{eq-33a}
    &&E_R(\alpha \beta \mid X_t)\\\nonumber
    =&&\1_{\{E_R(\alpha \mid X_t)<\infty, E_R(\beta \mid X_t)<\infty\}}
    E_R(\alpha \mid X_t)E_R(\beta \mid X_t)\in [0,\infty)\quad R\ae
\end{eqnarray}
\end{enumerate}
 \end{lemma}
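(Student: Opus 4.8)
The plan is to make the multiplicative identity
$$
E_R(\alpha\beta\mid X_t)=E_R(\alpha\mid X_t)\,E_R(\beta\mid X_t),\quad R\ae,
$$
(with the convention $0\cdot\infty=0$) the engine driving the whole lemma, and to prove it first for \emph{bounded} nonnegative $\alpha\in\AAA 0t$, $\beta\in\AAA t1$. Since $\sigma(X_t)\subset\AAA 0t$, the tower property gives $E_R(\alpha\beta\mid X_t)=E_R\big(E_R(\alpha\beta\mid\AAA 0t)\mid X_t\big)$; pulling the $\AAA 0t$-measurable factor $\alpha$ out of the inner conditional expectation and then invoking the Markov property in the form $E_R(\beta\mid\AAA 0t)=E_R(\beta\mid X_t)$ turns it into $\alpha\,E_R(\beta\mid X_t)$, and a final extraction of the bounded $\sigma(X_t)$-measurable factor $E_R(\beta\mid X_t)$ yields the identity. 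All these conditional expectations are legitimate because $R_t$, hence $R_{[0,t]}$, is $\sigma$-finite by Lemma \ref{res-01}, $R$ being Markov and thus conditionable. I would then lift the boundedness by truncating, $\alpha_n=\alpha\wedge n$, $\beta_n=\beta\wedge n$, applying the identity to each pair, and letting $n\to\infty$: by conditional monotone convergence---immediate from the fundamental identity $E_R[a\,E_R(Z\mid X_t)]=E_R(aZ)$ for nonnegative $Z$ recalled above and ordinary monotone convergence---both sides pass to the limit, the only point to verify being that the product of two nondecreasing sequences in $[0,\infty]$ tends to the product of their limits under $0\cdot\infty=0$. Assertions (i) and (ii) are then a direct reading of this identity: the right-hand product vanishes at $\omega$ exactly when one factor does, which is (i), and when it is positive both factors are positive and the product lies in $(0,\infty]$, which is (ii).

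For part (b) I would first push everything down to time $t$. As $P$ is conditionable and $P\prec R$, Theorem \ref{res-LZ05}(b) applied to $\phi=X_t$ gives $P_t\prec R_t$ with $\frac{dP_t}{dR_t}=E_R(\alpha\beta\mid X_t)$, $R_t\ae$; being a Radon--Nikodym density this is finite, so $E_R(\alpha\beta\mid X_t)<\infty$, $R\ae$, and by part (a) the product $E_R(\alpha\mid X_t)\,E_R(\beta\mid X_t)$ is finite $R\ae$ too. I would then use that ``$P\ae$'' means ``$R\ae$ on $\{\alpha\beta>0\}$'': from $P(\alpha\beta=0)=\int_{\{\alpha\beta=0\}}\alpha\beta\,dR=0$ we get $\alpha,\beta>0$, $P\ae$ Positivity of the conditional expectations comes from the fundamental formula of the positive conditional expectation: on $B:=\{E_R(\alpha\mid X_t)=0\}\in\sigma(X_t)$ one has $E_R(\1_B\alpha)=E_R\big(\1_B E_R(\alpha\mid X_t)\big)=0$, so $\alpha=0$ $R\ae$ on $B$, whence $P(B)=0$; thus $E_R(\alpha\mid X_t)>0$, and symmetrically $E_R(\beta\mid X_t)>0$, $P\ae$ Since $P\ll R$, the product is also finite $P\ae$, and an infinite factor against a positive one would force the product infinite; hence both factors are finite $P\ae$, and the product identity of part (a) gives the displayed system of (b) with value in $(0,\infty)$.

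It remains to establish the $R\ae$ formula \eqref{eq-33a}. Writing $a=E_R(\alpha\mid X_t)$, $b=E_R(\beta\mid X_t)$, I would run a case analysis against the set $\{a<\infty,\ b<\infty\}$, using the product identity $E_R(\alpha\beta\mid X_t)=ab$ of part (a) together with $E_R(\alpha\beta\mid X_t)<\infty$, $R\ae$ On $\{a<\infty,b<\infty\}$ the indicator equals $1$ and the identity is exactly \eqref{eq-33a}; on the complement, finiteness of $ab$ with, say, $a=\infty$ forces $b=0$, so $ab=0=E_R(\alpha\beta\mid X_t)$ there, matching the vanishing indicator. The right-hand side thus lies in $[0,\infty)$ by construction.

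I expect the only real obstacle to be the bookkeeping of the $0\cdot\infty=0$ convention, both when passing to the limit in the truncation argument and in the case analysis for \eqref{eq-33a}; the genuine content sits in the bounded-case Markov computation and in the translation between $R\ae$ and $P\ae$ effected by the density $\alpha\beta$.
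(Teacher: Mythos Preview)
Your proof is correct and follows essentially the same path as the paper's: establish the product identity via the Markov property for truncated $\alpha,\beta$, pass to the limit by conditional monotone convergence, then for (b) identify $E_R(\alpha\beta\mid X_t)$ with $dP_t/dR_t(X_t)$ through Theorem \ref{res-LZ05}(b) to obtain finiteness and positivity, and finish \eqref{eq-33a} by the same case analysis. The one cosmetic difference is that the paper truncates not only by $\alpha\wedge n$, $\beta\wedge n$ but also by indicators $\1_{\Omega_n^0}$, $\1_{\Omega_n^1}$ built from $\sigma$-finite partitions of $R_0$ and $R_1$, so that its base case is about genuinely $L^1(R)$ functions and the ordinary conditional expectation of Definition \ref{def-LZ08}; your choice to stay with merely bounded truncations and lean on the extended conditional expectation (Definition \ref{def-LZ07}, legitimate since $R_t$ and $R_{[0,t]}$ are $\sigma$-finite) is equally valid.
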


As regards \eqref{eq-33a}, even if $\alpha
\beta $ is integrable, it is not true in general that the nonnegative functions $\alpha $ and $\beta $ are integrable. Therefore, a priori the conditional expectations  $E_R(\alpha \mid
X_t)$ and $E_R(\beta \mid X_t)$ may be infinite.

\begin{proof}
\boulette{(a)}  The  measure $R$  disintegrates with respect to the initial and final positions:
\begin{equation*}
    R=\IX R(\cdot\mid X_0=x)\,R_0(dx)=\IX R(\cdot\mid
    X_1=y)\,R_1(dy)
\end{equation*}
But,   $R_0$ and $R_1$ are assumed to be $\sigma$-finite measures. Let $\seq{\XX^0}n$ and $\seq{\XX^1}n$ be two $\sigma$-finite partitions of $R_0$ and $R_1,$ respectively. We denote
$\Omega^0_n=\{X_0\in \cup _{k\le n}\XX^0_k\},$ $\Omega^1_n=\{X_1\in\cup _{k\le n} \XX^1_k\}$ and
$\Omega_n=\Omega_n^0 \cap \Omega_n^1.$

As $R$ is Markov, if the functions $\alpha $ and
$\beta $ are integrable, then $E_R(\alpha \mid X_t)$ are
$E_R(\beta \mid X_t)$ well-defined and
\begin{equation*}
    E_R(\alpha \beta
\mid X_t)=E_R(\alpha \mid X_t)E_R(\beta \mid X_t).
\end{equation*}
Letting $n$ tend to infinity in $E_R[(\alpha \wedge n)(\beta \wedge
n)\1_{\Omega_n}\mid X_t]=E_R((\alpha \wedge
n)\1_{\Omega_n^0}\mid X_t)E_R((\beta \wedge
n)\1_{\Omega_n^1}\mid X_t),$ we obtain
$E_R(\alpha \beta \mid X_t)=E_R(\alpha \mid X_t)E_R(\beta \mid
X_t)\in[0,\infty].$  One concludes, remarking that the sequences are increasing.

\Boulette{(b)} It is a consequence of the first part of the lemma. As $P_t$ is  $\sigma$-finite measure,
$\frac{dP_t}{dR_t}(X_t)<\infty,$ $R\ae$ (hence, a fortiori  $P\ae).$
In addition, $\frac{dP_t}{dR_t}(X_t)>0,$ $P\ae$ (but not $R\ae$ in general) and $\frac{dP_t}{dR_t}(X_t)=E_R(\alpha \beta \mid X_t)$, by 
Theorem \ref{res-LZ05}-(b). Consequently, we are allowed to apply part (ii) of (a) to obtain the identity which holds $P\ae$ This identity extends $R\ae$, yielding
\eqref{eq-33a}. To see this, remark with part (i) of (a) that when the density vanishes, the two terms of the product cannot be \emph{simultaneously} equal to $\infty$ and one of them vanishes.
 \end{proof}

Analogously, one can prove the following extension.

\begin{lemma}\label{res-24b} Let $R\in\MO$ be a Markov measure.
\begin{enumerate}
    \item Let  $0\le s\le t\le1$ and two nonnegative functions $\alpha ,\beta $ such that  $\alpha \in\AAA 0s,$
$\beta \in\AAA t1.$ Then, for any $\omega$ outside an $R$-negligible set:
\begin{enumerate}[(a)]
    \item if $E_R(\alpha \beta \mid X_{[s,t]})(\omega)=0,$ we have $E_R(\alpha \mid X_s)(\omega)=0$ or $E_R(\beta \mid X_t)(\omega)=0;$
    \item if $E_R(\alpha \beta \mid X_{[s,t]})(\omega)>0,$ we have $E_R(\alpha \mid
X_s)(\omega),E_R(\beta \mid X_t)(\omega)>0$ and
    $
    E_R(\alpha \beta \mid X_{[s,t]})(\omega)=E_R(\alpha \mid X_s)(\omega)E_R(\beta \mid
    X_t)(\omega)\in (0,\infty].
    $
\end{enumerate}
    \item Let $P\in\MO$ be a conditionable path measure such that $P\prec R$ and whose density writes as
    $\displaystyle{\frac{dP}{dR}=\alpha\zeta
\beta }$ with $\alpha, \zeta$ and $\beta$ nonnegative functions such that $\alpha \in\AAA 0s, \zeta\in\AAA st$ and $\beta \in\AAA
t1$ for some $0\le s\le t\le1.$ Then,
$$
\left\{
\begin{array}{l}
  E_R(\alpha \mid X_s),E_R(\beta \mid X_t)\in (0,\infty) \\
  E_R(\alpha \beta \mid X_{[s,t]})=E_R(\alpha \mid X_s)E_R(\beta \mid
    X_t)\in (0,\infty)\\
\end{array}
    \right.\quad P\ae
$$
(and not  $R\ae$ in general). In addition,
\begin{eqnarray*} 
    &&E_R(\alpha\zeta \beta \mid X_{[s,t]})\\\nonumber
    =&&\1_{\{E_R(\alpha \mid X_s)<\infty, E_R(\beta \mid X_t)<\infty\}}
    E_R(\alpha \mid X_s)\zeta E_R(\beta \mid X_t)\in [0,\infty)\quad R\ae
\end{eqnarray*}
\end{enumerate}
 \end{lemma}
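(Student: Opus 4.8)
The plan is to follow the proof of Lemma~\ref{res-24} step by step, the one genuinely new ingredient being that the single-time conditioning $E_R(\cdot\mid X_t)$ there is here replaced by the conditioning $E_R(\cdot\mid X_{[s,t]})=E_R(\cdot\mid\AAA st)$ on the whole intermediate piece of trajectory. Everything rests on the factorization
\begin{equation}\label{eq-plan-fact}
E_R(\alpha\beta\mid X_{[s,t]})=E_R(\alpha\mid X_s)\,E_R(\beta\mid X_t)\in[0,\infty],\quad R\ae,
\end{equation}
valid for all nonnegative $\alpha\in\AAA 0s$ and $\beta\in\AAA t1$. These conditional expectations are legitimate because $R$, being Markov, is conditionable, so that $R_s$ and $R_t$ are $\sigma$-finite and $R_{[s,t]}$ is $\sigma$-finite by Lemma~\ref{res-01}. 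Granting \eqref{eq-plan-fact}, assertions (a) and (b) of part (1) are read off exactly as in Lemma~\ref{res-24}: a product in $[0,\infty]$ vanishes if and only if one of its factors does, and is positive if and only if both factors are.

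To establish \eqref{eq-plan-fact} I would first treat bounded $R$-integrable $\alpha,\beta$. Since $\alpha$ is $\AAA 0s$-measurable, a fortiori $\AAA 0t$-measurable, the forward Markov property gives $E_R(\alpha\beta\mid\AAA 0t)=\alpha\,E_R(\beta\mid X_t)$; applying $E_R(\cdot\mid X_{[s,t]})$ (recall $\AAA st\subseteq\AAA 0t$) and pulling out the factor $E_R(\beta\mid X_t)$, which is $\sigma(X_t)$-measurable and hence $\AAA st$-measurable, yields $E_R(\alpha\beta\mid X_{[s,t]})=E_R(\beta\mid X_t)\,E_R(\alpha\mid X_{[s,t]})$. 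It remains to collapse $E_R(\alpha\mid X_{[s,t]})=E_R(\alpha\mid X_s)$: for a bounded test function $h=h(X_{[s,t]})$, conditioning on $\AAA 0s$ and using $E_R(h\mid\AAA 0s)=E_R(h\mid X_s)$ (forward Markov property, since $h$ is $\AAA s1$-measurable) shows that both $E_R(\alpha h)$ and $E_R\big(E_R(\alpha\mid X_s)\,h\big)$ equal $E_R\big(E_R(\alpha\mid X_s)\,E_R(h\mid X_s)\big)$. The passage to arbitrary nonnegative $\alpha,\beta$ is the truncation of Lemma~\ref{res-24}: with $\sigma$-finite partitions $\seq{\XX^0}n$, $\seq{\XX^1}n$ of $R_0$ and $R_1$ and $\Omega^0_n=\{X_0\in\cup_{k\le n}\XX^0_k\}$, $\Omega^1_n=\{X_1\in\cup_{k\le n}\XX^1_k\}$, one applies the bounded case to $(\alpha\wedge n)\1_{\Omega^0_n}$ and $(\beta\wedge n)\1_{\Omega^1_n}$ and lets $n\to\infty$, the conditional monotone convergence theorem promoting the identity to $[0,\infty]$, $R\ae$

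For part (2) the function $\zeta\in\AAA st$ is $\sigma(X_{[s,t]})$-measurable, hence factors out of the conditioning, so \eqref{eq-plan-fact} upgrades to $E_R(\alpha\zeta\beta\mid X_{[s,t]})=\zeta\,E_R(\alpha\mid X_s)E_R(\beta\mid X_t)$ in $[0,\infty]$, $R\ae$ Since $P$ is conditionable with $P\prec R$, both $P_{[s,t]}$ and $R_{[s,t]}$ are $\sigma$-finite, and Theorem~\ref{res-LZ05}-(b) identifies this quantity with $\frac{dP_{[s,t]}}{dR_{[s,t]}}(X_{[s,t]})$, which is finite $R\ae$ and strictly positive $P\ae$ The density $dP/dR=\alpha\zeta\beta$ being $[0,\infty)$-valued and $P$-a.e.\ positive, the three factors lie in $(0,\infty)$, $P\ae$ Dividing by $\zeta$ forces $E_R(\alpha\mid X_s)E_R(\beta\mid X_t)\in(0,\infty)$, whence $E_R(\alpha\mid X_s),E_R(\beta\mid X_t)\in(0,\infty)$, $P\ae$, and part (1)(b) supplies the remaining $P\ae$ equality. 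The concluding $R\ae$ identity follows by the same bookkeeping that yields \eqref{eq-33a} in Lemma~\ref{res-24}: where both endpoint conditional expectations are finite the indicator equals $1$ and the claim is \eqref{eq-plan-fact} weighted by $\zeta$; where one is infinite and the other positive, finiteness of $E_R(\alpha\zeta\beta\mid X_{[s,t]})$ forces $\zeta=0$, $R\ae$, so both sides vanish; and where one factor is zero, part (1)(a) makes both sides vanish as well.

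The main obstacle is the ``two-sided'' Markov reduction encapsulated in \eqref{eq-plan-fact}. In Lemma~\ref{res-24} a single application of the Markov property at time $t$ splits the product; here one must collapse the interval conditioning $E_R(\cdot\mid X_{[s,t]})$ onto the two endpoints $X_s$ and $X_t$ \emph{simultaneously}, which forces one to invoke the forward Markov property twice and to handle the gap $[s,t]$ with care. The only other delicate point is bookkeeping: tracking, both in the truncation limit and in the indicator of the concluding identity, the $[0,\infty]$-valued products in which a factor may be infinite.
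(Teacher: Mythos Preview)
Your proposal is correct and is precisely the analogous argument the paper has in mind: the paper gives no separate proof of Lemma~\ref{res-24b} beyond the sentence ``Analogously, one can prove the following extension,'' and your write-up carries out that analogy faithfully, replacing the single-time conditioning of Lemma~\ref{res-24} by $E_R(\cdot\mid X_{[s,t]})$ and invoking the forward Markov property twice to collapse onto the endpoints. The truncation-and-monotone-convergence passage and the Theorem~\ref{res-LZ05}-(b) step in part (2) mirror the paper's proof of Lemma~\ref{res-24} exactly.
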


\appendix

\section{Conditional expectation with respect to an unbounded measure}\label{sec-CE}

In standard textbooks, the theory of conditional expectation  is presented and developed with respect to a probability measure (or equivalently, a bounded positive measure). However, there are natural unbounded path measures, such as the reversible Brownian motion on $\Rn,$ with respect to which a conditional expectation theory is needed. We present the details of this notion in this appendix section. From a measure theoretic viewpoint, this section is about disintegration of unbounded positive measures.

\subsection*{The role of $\sigma$-finiteness in Radon-Nikodym theorem}

The keystone of conditioning is Radon-Nikodym theorem. In order to emphasize the role of $\sigma$-finiteness, we recall a classical proof of this theorem, following von Neumann and Rudin, \cite{Rud-Analysis}. Let $\OO$ be a space with its $\sigma$-field and $P,Q,R\in\MO$ be positive measures on $\OO.$ One says that $P$ is absolutely continuous with respect to $R$ and denotes $P\ll R,$ if for every measurable subset $A\subset \OO,$ $R(A)=0\Rightarrow P(A)=0.$ It is said to be concentrated on the measurable subset $C\subset\OO$ if for any measurable subset $A\subset\OO,$ $P(A)=P(A\cap C).$ The measures $P$ and $Q$ are said to be mutually singular and one denotes $P\bot Q,$ if there exist two disjoint measurable subsets $C,D\subset\OO$ such that $P$ is concentrated on $C$ and $Q$ is concentrated on $D.$

\begin{theorem}\label{res-LZ32}
Let $P$ and $R$ be two bounded positive measures.
\begin{enumerate}[(a)]
    \item There exists a unique pair $(P_a,P_s)$  of measures
    such that $P=P_a+P_s,$ $P_a\ll R$ and $P_s\bot R.$
    These measures are positive and  $P_a\bot P_s.$
    \item There is a unique function $\theta \in L^1(R)$ such that
    \begin{equation*}
    P_a(A)=\int_A \theta \, dR, \quad \textrm{for any measurable subset }A.
\end{equation*}
\end{enumerate}
\end{theorem}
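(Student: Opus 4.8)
The plan is to follow von Neumann's Hilbert space argument, whose entire force comes from boundedness---the finite-measure instance of the $\sigma$-finiteness this section is built to emphasize. First I would form the bounded positive measure $\mu := P + R \in \MO$ and consider the linear functional $f \mapsto \IO f \, dP$ on $L^2(\mu)$. Since $\mu(\OO) < \infty$, Cauchy--Schwarz gives $|\IO f\, dP| \le \IO |f| \, d\mu \le \mu(\OO)^{1/2}\, \|f\|_{L^2(\mu)}$, so this functional is continuous. The Riesz representation theorem on the Hilbert space $L^2(\mu)$ then produces a function $g \in L^2(\mu)$ with $\IO f g \, d\mu = \IO f \, dP$ for every $f \in L^2(\mu)$.

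Next I would show that $0 \le g \le 1$ holds $\mu$-a.e.\ by testing the representation against the indicators of $\{g < 0\}$ and $\{g > 1\}$ and using the positivity of $P$ and $R$, which forces both sets to be $\mu$-negligible. Rewriting the identity as $\IO f(1-g)\, dP = \IO f g \, dR$ and splitting $\OO$ into the disjoint pieces $A := \{g < 1\}$ and $B := \{g = 1\}$, I would define $P_a(E) := P(E \cap A)$ and $P_s(E) := P(E \cap B)$; these are positive measures with $P = P_a + P_s$ and $P_a \bot P_s$, since $A$ and $B$ are disjoint.

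The singular part is then immediate: taking $f = \1_B$ in the rewritten identity makes the left-hand side vanish, so $\int_B g\, dR = 0$, and since $g \equiv 1$ on $B$ this gives $R(B) = 0$; thus $R$ is concentrated on $A$ while $P_s$ is concentrated on $B$, that is $P_s \bot R$. For the absolutely continuous part I would iterate the representation: inserting $f = (1 + g + \cdots + g^n)\1_E$ yields $\int_E (1 - g^{n+1})\, dP = \int_E g(1 + g + \cdots + g^n)\, dR$ for every measurable $E$. On $A$ we have $g < 1$, so as $n \to \infty$ the left-hand side increases to $P(E \cap A) = P_a(E)$ and, by monotone convergence, the right-hand side increases to $\int_E \theta \, dR$ with $\theta := \frac{g}{1-g}\, \1_A$. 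Hence $P_a(E) = \int_E \theta \, dR$, and since $P$ is bounded we have $P_a(\OO) < \infty$, so $\theta \in L^1(R)$.

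It remains to address uniqueness, which rests on the observation that a measure that is simultaneously absolutely continuous and singular with respect to $R$ must be zero. Given two decompositions $P_a + P_s = P_a' + P_s'$, the difference $P_a - P_a' = P_s' - P_s$ is both $\ll R$ and $\bot R$, hence vanishes, forcing $P_a = P_a'$ and $P_s = P_s'$; uniqueness of $\theta$ in $L^1(R)$ is the routine argument comparing integrals over the sets $\{\theta > \theta'\}$ and $\{\theta < \theta'\}$. The step I expect to carry the real weight is the appeal to Riesz representation on $L^2(\mu)$: it is precisely finiteness of $\mu$---and thus of $P$ and $R$---that places $g$ in $L^2$ and makes the whole construction run, which is exactly the role of boundedness (respectively $\sigma$-finiteness) this appendix is meant to isolate.
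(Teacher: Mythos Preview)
Your proposal is correct and follows essentially the same argument as the paper: von Neumann's Hilbert space proof via the Riesz representation on $L^2(P+R)$, the identification of $0\le g\le1$, the split along $\{g<1\}$ and $\{g=1\}$, the geometric-series iteration with monotone convergence to produce $\theta=\1_{\{g<1\}}\,g/(1-g)$, and the same uniqueness arguments. The only cosmetic difference is that the paper treats uniqueness first and deduces $0\le g\le1$ directly from $0\le P\le P+R$ rather than by testing indicators.
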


\begin{proof}
The uniqueness proofs are easy. Let us begin with (a). Suppose we have two Lebesgue decompositions: $P=P_a+P_s=P_a'+P_s'.$
Then, $P_a-P_a'=P_s'-P_s,$ $P_a-P_a'\ll R$ and $P_s'-P_s\bot R.$
Hence, $P_a-P_a'=P_s'-P_s=0$ since $Q\ll R$ and $Q\bot R$ imply that
 $Q=0.$ As regards (b), if we have $P_a=\theta
R=\theta 'R,$ then $\int_A (\theta -\theta ')\, dR=0$ for any measurable $A\subset\OO.$ Therefore $\theta =\theta ', R\ae$

Denote $Q=P+R.$ It is a bounded positive measure and for any function $f\in L^2(Q),$
\begin{equation}\label{eq-LZ38}
    |\IO f\,dP|\le \IO |f|\, dQ\le
\sqrt{Q(\Omega)} \|f\|_{L^2(Q)}.
\end{equation}
It follows that $f\in L^2(Q)\mapsto \IO f\,dP\in\mathbb{R}$ is a continuous linear form on the  Hilbert space $L^2(Q).$ Consequently, there exists $g\in L^2(Q)$ such that
\begin{equation}\label{eq-LZ36}
    \IO f\,dP=\IO fg\,dQ,\quad \forall f\in L^2(Q).
\end{equation}
Since $0\le P\le P+R:=Q,$ we obtain $0\le g\le 1,$
$Q\ae$ Let us take a version of $g$ such that $0\le g\le 1$ everywhere.
The identity \eqref{eq-LZ36} rewrites as
\begin{equation}\label{eq-LZ37}
    \IO (1-g)f\,dP=\IO fg\,dR,\quad \forall f\in L^2(Q).
\end{equation}
Let us set  $C:=\{0\le g<1\},$ $D=\{g=1\},$ $P_a(\cdot)=P(\cdot\cap C)$
et $P_s(\cdot)=P(\cdot\cap D).$
\\
Choosing $f=\1_D$ in \eqref{eq-LZ37}, we obtain $R(D)=0$ so that $P_s\bot R.$
\\
Choosing $f=(1+g+\cdots+g^n)\1_A$ with $n\ge1$ and $A$ any measurable subset in \eqref{eq-LZ37}, we obtain
\begin{equation*}
    \int_A (1-g^{n+1})\,dP=\int_A g(1+g+\cdots+g^n)\,dR.
\end{equation*}
But the sequence of functions $(1-g^{n+1})$ increases  pointwise towards $\1_C.$ Now, by the monotone convergence theorem, we have $P(A\cap C)=\int_A \1_C g/(1-g)\,dR.$ This means that
$P_a=\theta R$ with $\theta =\1_{\{0\le g<1\}} g/(1-g).$
\\
Finally, we see that $\theta \ge0$ is $R$-intégrable since
$\IO \theta \,dR=P_a(\Omega)\le P(\Omega)<\infty.$
\end{proof}

The main argument of this proof is Riesz theorem on the representation of the dual of a Hilbert space. As the continuity of the linear form is ensured by $Q(\OO)<\infty$ at \eqref{eq-LZ38}, we have used crucially the boundedness of the measures $P$ and $R.$ This can be relaxed by means of the following notion.

\begin{definition}\label{def-LZ01}
The positive measure $R$ is said to be $\sigma$-finite if it is either bounded or if there exists  a sequence
$\seq{\Omega}k$ of disjoint measurable subsets which partitions 
$\Omega:$ $\sqcup_k\Omega_k=\Omega$ and are such that
$R(\Omega_k)<\infty$ for all $k.$
\\
In such a case it is said that $\seq{\Omega}k$ finitely partitions  $R$ or that it is a  $\sigma$-finite partition of $R.$
\end{definition}

Recall that an unbounded positive measure is allowed to take the value 
$+\infty.$ For instance, the measure $R$ which is defined on the trivial $\sigma$-field $\{\emptyset,\Omega\}$ by $R(\emptyset)=0$ and
$R(\Omega)=\infty$ is a genuine positive measure and
$L^1(R)=\{0\}$. This situation may seem artificial, but in fact it is not, as can be observed with the following examples.

\begin{examples}\
\begin{enumerate}[(a)]
    \item The push-forward of Lebesgue measure on $\mathbb{R}$ by a function which  takes finitely many values is a positive measure on the set of these values which charges at least one of them with an infinite mass. Remark in passing that this provides us with an example of a  $\sigma$-finite measure whose pushed forward is not.
    \item Lebesgue measure on $\RR^2$ is $\sigma$-finite,
    but its push-forward by the projection on the first coordinate
    assigns an infinite mass to any non-negligible Borel set.
\end{enumerate}
\end{examples}

\begin{theorem}[Radon-Nikodym]\label{res-LZ33} Let $P$ and $R$ two positive $\sigma$-finite measures such that $P\ll R.$ Then, there exists a unique measurable function $\theta $ such that
\begin{equation}\label{eq-LZ35}
    \IO f\,dP=\IO f\theta \,dR,\quad \forall f\in L^1(P).
\end{equation}
Moreover, $P$ is bounded if and only if $\theta \in L^1(R).$
\end{theorem}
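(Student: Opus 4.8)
The plan is to bootstrap from the bounded Lebesgue decomposition of Theorem \ref{res-LZ32} by cutting both measures into finite pieces along a common $\sigma$-finite partition, solving the problem separately on each piece, and gluing the resulting densities together.

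First I would produce a single partition $\seq{\Omega}n$ of $\Omega$ into disjoint measurable sets with $P(\Omega_n)<\infty$ and $R(\Omega_n)<\infty$ for every $n$. This is obtained by intersecting a $\sigma$-finite partition for $P$ with one for $R$: the countable family of intersections is simultaneously finite for both measures once empty sets are discarded and the family relabelled. On each block I would restrict the measures, setting $P_n:=P(\cdot\cap\Omega_n)$ and $R_n:=R(\cdot\cap\Omega_n)$, which are now bounded. Since $P\ll R$ forces $P_n\ll R_n$, the singular part in the Lebesgue decomposition vanishes: $(P_n)_s$ is concentrated on some $D$ with $R_n(D)=0$, and $P_n\ll R_n$ then gives $(P_n)_s=P_n(\cdot\cap D)=0$. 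Theorem \ref{res-LZ32}(b) thus supplies a nonnegative $\theta_n\in L^1(R_n)$, which I may take to vanish off $\Omega_n$ and regard as a density with respect to $R$ itself, with $P(\cdot\cap\Omega_n)=\theta_n R$.

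Next I would set $\theta:=\sum_{n\ge1}\theta_n$, a nonnegative measurable function. For any nonnegative measurable $f$, countable additivity of the integral over the partition together with monotone convergence yields
$$
\IO f\,dP=\sum_{n\ge1}\int_{\Omega_n}f\,dP=\sum_{n\ge1}\IO f\theta_n\,dR=\IO f\theta\,dR\in[0,\infty].
$$
Restricting to $f\in L^1(P)$ by splitting into positive and negative parts gives \eqref{eq-LZ35}; taking $f=\1_A$ with $P(A)<\infty$ shows that $\theta R=P$ as measures. The final equivalence is then immediate: with $f\equiv1$ in the nonnegative identity, $P(\Omega)=\IO\theta\,dR$, so $P$ is bounded precisely when $\theta\in L^1(R)$.

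For uniqueness I would suppose that $\theta'$ also satisfies \eqref{eq-LZ35}. Then $\theta R$ and $\theta'R$ agree on every set of finite $P$-measure, in particular on each $A\cap\Omega_n$, where both densities are $R$-integrable since their integral equals $P(\Omega_n)<\infty$. Testing against $A=\{\theta>\theta'\}\cap\Omega_n$ and $A=\{\theta<\theta'\}\cap\Omega_n$ forces $\theta=\theta'$ $R\ae$ on each $\Omega_n$, hence $R\ae$ on $\Omega$. The only delicate point in the whole argument is the bookkeeping of the gluing step --- checking that the global integral identity really decomposes over the partition --- but this is dispatched cleanly by monotone convergence; the genuine content lies in the reduction to the bounded case, where $\sigma$-finiteness is precisely what licenses the partition and hence the appeal to the Hilbert-space argument underlying Theorem \ref{res-LZ32}.
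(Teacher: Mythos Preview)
Your proof is correct and follows exactly the strategy of the paper: intersect $\sigma$-finite partitions for $P$ and $R$ to obtain a common one, apply the bounded case (Theorem~\ref{res-LZ32}) on each piece, and glue the local densities. You have simply spelled out in detail what the paper compresses into a few sentences, including the monotone-convergence bookkeeping and the uniqueness argument.
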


\begin{proof}
Taking the intersection of two partitions which respectively finitely partition $R$ and $P$, one obtains a countable measurable partition which simultaneously finitely partitions $R$ and $P.$ Theorem \ref{res-LZ32} applies on each subset of this partition and one obtains the desired result by recollecting the pieces. The resulting function $\theta$ need not be integrable anymore, but it is still is locally integrable in the sense that it is integrable in restriction to each subset of the partition. We have just extended Theorem \ref{res-LZ32} when the measures $P$ and $R$ are $\sigma$-finite. We conclude noticing that by Theorem \ref{res-LZ32} we have: $P\ll R$ if and only if $P_s=0.$
\end{proof}

As regards Radon-Nikodym theorem, making a step away from $\sigma$-finiteness seems to be hopeless, as one can guess from the following example. Take $R=\sum_{x\in
[0,1]}\delta_x:$ the counting measure on $\Omega=[0,1],$ and $P$
the Lebesgue measure on $[0,1].$ We see that $P\ll R,$ but there is no measurable function $\theta$ which satisfies \eqref{eq-LZ35}.

\subsection*{Conditional expectation with respect to a positive measure}

Let $\OO$ be a space furnished with some $\sigma$-field and  a sub-$\sigma$-field $\mathcal{A}$. We take a positive measure $R\in\MO$ on $\OO$ and denote $R _{\mathcal{A}}$ its restriction to $\mathcal{A}$. The space of bounded measurable functions is denoted by $B$, while $B_\mathcal{A}$ is the subspace of bounded $\mathcal{A}$-measurable functions. The subspace of $L^1(R)$ consisting of the $\mathcal{A}$-measurable integrable functions is denoted by $L^1(R_\mathcal{A})$ .

We take $g\ge0$ in $L^1(R).$ The mapping
$h\in B_\mathcal{A} \mapsto \IO hg\,dR:=\IO h\,dR^g_\mathcal{A}$ defines a \emph{finite} positive measure $R^g_\mathcal{A}$ on $(\Omega,\mathcal{A}).$
Clearly, if $h\ge0$ and $\IO h\,dR=\IO h\,dR_\mathcal{A}=0,$ then $\IO
h\,dR^g_\mathcal{A}=0.$ This means that $R^g_\mathcal{A}$ is a finite measure which is absolutely continuous with respect to $R_\mathcal{A}.$ If 
$R_\mathcal{A}$ is assumed to be $\sigma$-finite, by the Radon-Nikodym Theorem
\ref{res-LZ33}, there is a unique function $\theta_g\in L^1(R_\mathcal{A})$ such that
$R^g_\mathcal{A}=\theta_g R_\mathcal{A}.$ We have just obtained
    $\IO hg\,dR= \IO h \theta_g\,dR_\mathcal{A}=\IO h\theta_g\,dR,$
    $\forall h\in B_\mathcal{A}.$ Now, let $f\in
    L^1(R)$ which might not be nonnegative. Considering its decomposition $f=f_+-f_-$ into nonnegative and nonpositive parts:
    $f_+=f\vee 0,$ $f_-=(-f)\vee 0,$
    and setting $\theta_f=\theta_{f_+}-\theta_{f_-},$ we obtain
\begin{equation}\label{eq-LZ03}
    \IO hf\,dR= \IO h \theta_f\,dR_\mathcal{A}=\IO h\theta_f\,dR,\quad
     \forall h\in B_\mathcal{A}, f\in L^1(R).
\end{equation}

\begin{definition}[Conditional expectation]\label{def-LZ08} It is assumed that $R_\mathcal{A}$ is $\sigma$-finite. 
\\
For any $f\in L^1(R),$
the conditional expectation of $f$ with respect to  $\mathcal{A}$ is the unique (modulo $R\ae$-equality) function
$$E_R(f\mid\mathcal{A})\in L^1(R_\mathcal{A})$$
which is integrable, $\mathcal{A}$-measurable and such that
$\theta_f=:E_R(f\mid\mathcal{A})$ satisfies \eqref{eq-LZ03}.
\end{definition}
It is essential in this definition that $R_\mathcal{A}$ is assumed to be $\sigma$-finite.
\\
Of course,
\begin{equation}\label{eq-LZ06}
    E_R(f\mid\mathcal{A})=f,\quad \forall f\in L^1(R_\mathcal{A})
\end{equation}
If, in \eqref{eq-LZ03}, we take the function
$h=\mathrm{sign}(E_R(f|\mathcal{A}))$ which is in $B_\mathcal{A} ,$ we have
\begin{equation}\label{eq-LZ05}
    \IO |E_R(f\mid\mathcal{A})|\,dR_\mathcal{A}\le \IO |f|\,dR
\end{equation}
which expresses that $E_R(\cdot\mid\mathcal{A}): L^1(R)\to L^1(R_\mathcal{A})$ is a
contraction, the spaces $L^1$ being equipped with their usual norms
$\|\cdot\|_1$. With \eqref{eq-LZ06}, we see that the opertot norm of this contraction is 1. Therefore, $E_R(\cdot\mid\mathcal{A}): L^1(R)\to
L^1(R_\mathcal{A})$ is a continuous projection.
\\
Taking $h=1$ in \eqref{eq-LZ03}, we have
\begin{equation*}
    \IO f(\omega)\,R(d\omega)=\IO E_R(f\mid\mathcal{A})(\eta)\,R(d\eta),
\end{equation*}
which can be  written
\begin{equation}\label{eq-LZ08}
    E_R E_R(f\mid\mathcal{A})=E_R(f),
\end{equation}
with the notation $E_R(f):=\IO f\,dR.$

\begin{remark}\label{rem-LZ01}
When $R$ is a bounded measure, the mapping
$E_R(\cdot\mid\mathcal{A})$ shares the following properties.
\begin{enumerate}[(a)]
    \item For all $f\in L^1(R)\ge0,$ $E_R(f\mid\mathcal{A})\ge0,$ $R_\mathcal{A}\ae$
    \item $E_R(1\mid\mathcal{A})=1,$ $R_\mathcal{A}\ae$
    \item For all $f,g\in L^1(R)$ and  $\lambda\in\RR,$ $E_R(f+\lambda g\mid\mathcal{A})=E_R(f\mid\mathcal{A})+\lambda E_R(g\mid\mathcal{A}),$ $R_\mathcal{A}\ae$
    \item For any sequence $\seq fn$ in $L^1(R)$ with $0\le f_n\le1,$
    which converges \emph{pointwise} to 0, we have: $\Lim n
    E_R(f_n\mid\mathcal{A})=0,$ $R_\mathcal{A}\ae$
\end{enumerate}
Except for the ``$R_\mathcal{A}\ae$", these properties  characterize the expectation with respect to a probability measure. They can easily be checked, using \eqref{eq-LZ03}, as follows.
\begin{enumerate}[(i)]
    \item
     For any $h\in B_\mathcal{A} \ge0$ and $f\in L^1(R)\ge0,$ \eqref{eq-LZ03} implies that $\IO h
    E_R(f\mid\mathcal{A})\,dR_\mathcal{A}\ge0,$ which in turns implies  (a).
    \item
     For any $h\in B_\mathcal{A} \ge0,$ \eqref{eq-LZ03} implies that $\IO h
    E_R(1\mid\mathcal{A})\,dR_\mathcal{A}=\IO h\,dR_\mathcal{A},$ whence (b).
    \item The linearity of $f\mapsto E_R(f\mid\mathcal{A})(\eta)$ comes from the linearity of $f\mapsto \IO hf\,dR$ for all
    $h\in B_\mathcal{A} .$ Indeed, for all $f,g\in L^1(R)$
    and  $\lambda\in\RR,$ we have $\IO h E_R(f+\lambda g\mid\mathcal{A})\,dR=\IO h [E_R(f\mid\mathcal{A})+\lambda
    E_R(g\mid\mathcal{A})]\,dR,$ which implies (c).
    \item For any $h\in B_\mathcal{A} ,$ Fatou's lemma,  \eqref{eq-LZ03} and the dominated convergence theorem lead us to $0\le \IO h\Lim n E_R(f_n\mid\mathcal{A})\,dR_\mathcal{A}\le\Liminf n  \IO h E_R(f_n\mid\mathcal{A})\,dR_\mathcal{A}=\Lim n\IO hf_n\,dR=0.$
    This proves (d).
\end{enumerate}
We used the boundedness of $R$ at items (ii) and
(iv), since in this case, bounded functions are integrable.
\end{remark}

One could hope  that for $R_\mathcal{A}\ae$ $\eta,$ there exists a probability kernel $\eta\mapsto R(\cdot\mid\mathcal{A})(\eta)$ which admits $E_R(\cdot\mid\mathcal{A})$ as its expectation. But negligible sets have to be taken into account. Indeed, the $R_\mathcal{A}$-negligible sets which invalidate these equalities depend on the function $f,g$, the real numbers $\lambda$ and the sequences $\seq fn$. Their non-countable union might not be measurable, and even in this case the measure of this  union might be positive. Therefore, the $\sigma$-field on $\OO$ must not be too rich for such a probability kernel to exist. Let us give a couple of definitions before stating at Proposition \ref{res-LZ06} that $R(\cdot\mid\mathcal{A})$ exists in a general setting.

We are looking for a conditional probability measure in the following sense.

\begin{definition}[Regular conditional probability kernel]
The kernel $R(\cdot\mid\mathcal{A})$ is a regular conditional probability if
\begin{enumerate}[(a)]
    \item for any $f\in L^1(R),$ $E_R(f\mid\mathcal{A})(\eta)=\IO
    f\,R(d\omega\mid\mathcal{A})(\eta)$ for $R_\mathcal{A}$-almost every $\eta;$
    \item for $R_\mathcal{A}$-almost every $\eta,$ $R(\cdot\mid\mathcal{A})(\eta)$ is a probability measure on $\Omega.$
\end{enumerate}
\end{definition}

Property (a) was proved at Remark\ref{rem-LZ01}
when $R$ is a bounded measure. It si property (b) which requires additional work, even when $R$  is bounded. 
Proposition \ref{res-LZ06} provides us with a general setting where such a regular kernel exists.
When a regular conditional kernel $R(\cdot\mid\mathcal{A})$
exists, \eqref{eq-LZ08} is concisely expressed as a disintegration formula:
\begin{equation}\label{eq-LZ04}
    R(d\omega)=\int_{\{\eta\in\Omega\}} R(d\omega\mid\mathcal{A})(\eta)\,R_\mathcal{A}(d\eta)
\end{equation}

\begin{definition} Let $\phi:\Omega\to\XX$ be a measurable function with values in a measurable space $\XX.$
The smallest sub-$\sigma$-field on $\Omega$ which makes $\phi$ a
measurable function is called the \emph{$\sigma$-field generated by $\phi.$} It is denoted by $\mathcal{A}(\phi).$
\end{definition}

We are going to consider the conditional expectation with respect to  $\mathcal{A}(\phi)$ which is denoted by
$$
E(\cdot\mid\mathcal{A}(\phi))=E(\cdot\mid\phi).
$$

\begin{proposition} Let $\mathcal{B}$ be the $\sigma$-field on $\XX$ and $\phi^{-1}(\mathcal{B}):=\{\phi^{-1}(B);B\in\mathcal{B}\}.$
\begin{enumerate}
    \item $\mathcal{A}(\phi)=\phi^{-1}(\mathcal{B}).$
    \item Any  $\mathcal{A}(\phi)$-measurable function $g:\Omega\to\RR$ can be written as $$g=\tilde g\circ\phi$$ with $\tilde g:\XX\to\RR$ a measurable function.
\end{enumerate}
\end{proposition}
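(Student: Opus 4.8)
The plan is to treat the two assertions separately, deriving (2) from (1) by the standard approximation machine of measure theory.

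For (1), I would first check that $\phi^{-1}(\mathcal{B})$ is itself a $\sigma$-field on $\Omega.$ This is immediate because taking preimages commutes with complementation and countable unions: $\phi^{-1}(\XX)=\Omega,$ $\phi^{-1}(B^c)=\Omega\setminus\phi^{-1}(B)$ and $\phi^{-1}(\bigcup_n B_n)=\bigcup_n\phi^{-1}(B_n),$ while $\mathcal{B}$ is stable under these operations. Since every $\phi^{-1}(B)$ with $B\in\mathcal{B}$ lies in $\phi^{-1}(\mathcal{B}),$ the map $\phi$ is measurable for this $\sigma$-field, so by the minimality defining $\mathcal{A}(\phi)$ one gets $\mathcal{A}(\phi)\subseteq\phi^{-1}(\mathcal{B}).$ Conversely, any $\sigma$-field making $\phi$ measurable must contain every $\phi^{-1}(B),$ hence contains $\phi^{-1}(\mathcal{B})$; applying this to $\mathcal{A}(\phi)$ yields the reverse inclusion. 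The two inclusions give the equality.

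For (2), I would run the standard machine built on (1). If $g=\1_A$ with $A\in\mathcal{A}(\phi)=\phi^{-1}(\mathcal{B}),$ write $A=\phi^{-1}(B)$ and set $\tilde g=\1_B,$ so that $\tilde g\circ\phi=\1_{\phi^{-1}(B)}=\1_A=g.$ By linearity this extends to simple $\mathcal{A}(\phi)$-measurable functions. For a nonnegative $\mathcal{A}(\phi)$-measurable $g,$ choose simple $\mathcal{A}(\phi)$-measurable $g_n\uparrow g$ and pick $\tilde g_n$ with $g_n=\tilde g_n\circ\phi$; the pointwise limit superior $\tilde g:=\limsup_n\tilde g_n$ is $\mathcal{B}$-measurable and satisfies $\tilde g\circ\phi=\limsup_n g_n=g.$ Finally, splitting a general $g$ as $g_+-g_-$ and subtracting the two factorizations finishes the argument.

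The point that needs care is the finiteness of $\tilde g$ off the range of $\phi$: at a point $x\in\XX$ not of the form $\phi(\omega),$ the quantity $\limsup_n\tilde g_n(x)$ may equal $+\infty,$ so the factor produced by the machine need not be real-valued. This is harmless, and I would repair it by replacing $\tilde g$ with $\tilde g\,\1_{\{\tilde g<\infty\}},$ which is still $\mathcal{B}$-measurable and which agrees with the original on $\phi(\Omega)$ (there $\tilde g(\phi(\omega))=g(\omega)<\infty$), so the identity $g=\tilde g\circ\phi$ is preserved while $\tilde g$ is now everywhere finite. No uniqueness of $\tilde g$ is claimed, nor expected, since $\tilde g$ is only constrained on the range $\phi(\Omega).$
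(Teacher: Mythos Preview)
Your proof is correct. For part (1) your argument coincides with the paper's: both observe that $\phi^{-1}(\mathcal{B})$ is already a $\sigma$-field and conclude by minimality.

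For part (2) you take a genuinely different route. The paper attempts a direct pointwise construction: for each value $y\in g(\Omega)$ it uses (1) to pick $B_y\in\mathcal{B}$ with $\phi^{-1}(B_y)=g^{-1}(\{y\})$ and declares $\tilde g(x)=y$ for $x\in B_y$. You instead run the standard approximation machine (indicators, simple functions, monotone limits, then $g=g_+-g_-$), which is the textbook proof of the Doob--Dynkin lemma. Your approach is longer but fully self-contained: measurability of $\tilde g$ is automatic at each stage, and you explicitly repair the possible failure of finiteness off $\phi(\Omega)$. The paper's construction is more direct in spirit but, as written, does not address why $\tilde g$ is well defined on overlaps of the $B_y$ (these sets are not canonically chosen and need not be disjoint in $\XX$) nor why the resulting $\tilde g$ is $\mathcal{B}$-measurable; your argument avoids both issues.
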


\begin{proof}
    \boulette{(1)}
First remark that $\mathcal{A}(\phi)$ is the smallest sub-$\sigma$-field on $\Omega$ which makes $\phi$ a
measurable function. Consequently, it is the $\sigma$-field which is generated by $\phi^{-1}(\mathcal{B}).$
But it is easy to check that $\phi^{-1}(\mathcal{B})$ is a $\sigma$-field. Hence, $\mathcal{A}(\phi)=\phi^{-1}(\mathcal{B}).$

    \Boulette{(2)}
Let $y\in g(\Omega).$ As $g$ is $\mathcal{A}(\phi)$-measurable,
$g^{-1}(y)\in\mathcal{A}(\phi).$ By (1), it follows that there exists a measurable subset
$B_y\subset\XX$ such that $\phi^{-1}(B_y)= g^{-1}(y).$ Let us set
$$
\tilde{g}(x)=y,\quad \textrm{for all }x\in B_y.
$$
For any $\omega\in g^{-1}(y),$ we have $\phi(\omega)\in B_y,$
so that $g(\omega)=y=\tilde{g}(\phi(\omega)).$ But
$(g^{-1}(y))_{y\in g(\Omega)}$ is a partition of $\Omega,$ hence
$g(\omega)=\tilde{g}(\phi(\omega))$ for all $\omega\in\Omega.$
\end{proof}

This proposition allows us to denote
$$
x\in\XX\mapsto E_R(f\mid\phi=x)\in \RR
$$
 the unique function in $L^1(\XX,R_\phi)$ such
$E_R(f\mid\phi=\phi(\eta))=E_R(f\mid\mathcal{A}(\phi))(\eta),$ $R\ae$ en
$\eta.$

\begin{proposition}\label{res-LZ06}
Let $R\in\MO$ be a bounded positive measure on $\Omega$ and
$\phi:\Omega\to\XX$ a measurable  application in the Polish (separable, complete metric) space $\XX$ equipped with the corresponding Borel $\sigma$-field.  Then, $E_R(\cdot\mid\phi)$ admits a regular conditional probability kernel $x\in\XX\mapsto
R(\cdot\mid\phi=x)\in\PO.$ 
\end{proposition}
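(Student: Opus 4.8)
Proposition \ref{res-LZ06} asserts that when $R$ is a bounded measure on $\Omega$ and $\phi:\Omega\to\XX$ maps into a Polish space $\XX$, the conditional expectation $E_R(\cdot\mid\phi)$ is represented by a regular conditional probability kernel $x\mapsto R(\cdot\mid\phi=x)\in\PO$. Let me plan a proof.

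The plan is to construct the kernel explicitly by conditioning a single real coordinate and to let the standard Borel structure of the path space $\OO$ (which is Polish, being $D(\ii,\XX)$ or $C(\ii,\XX)$) furnish the countable additivity that the naive pointwise recipe cannot. First I would normalize: if $R=0$ the statement is vacuous, and otherwise $R/R(\OO)\in\PO$ has exactly the same conditional expectations as $R$, since the total mass cancels on both sides of the defining relation \eqref{eq-LZ03}. So I may assume $R\in\PO$. The obstruction, already flagged before the statement, is that putting $R(A\mid\phi=x):=E_R(\1_A\mid\phi=x)$ directly fails, because the $R_\phi$-negligible set outside which countable additivity holds depends on $A$ and there are uncountably many $A$.

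To bypass this, I would exploit that $\OO$, being Polish, is Borel isomorphic to a Borel subset of $[0,1]$; fix such an isomorphism $\psi:\OO\to[0,1]$ and reduce everything to the single real variable $\psi$. For each rational $t$ choose a version $F(t,\cdot)\in L^1(\XX,R_\phi)$ of $E_R(\1_{\{\psi\le t\}}\mid\phi=\cdot)$, which exists by Definition \ref{def-LZ08} because $R_\phi$ is bounded, hence $\sigma$-finite. By Remark \ref{rem-LZ01}, each of the \emph{countably many} relations — $0\le F(t,x)\le 1$, monotonicity in $t$, and the limits $F(t,x)\to 0$ as $t\downarrow-\infty$ and $F(t,x)\to 1$ as $t\uparrow+\infty$ along the rationals — holds for $R_\phi\ae$ $x$, so they hold simultaneously outside a single $R_\phi$-negligible set $N$. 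For $x\notin N$, extending $t\mapsto F(t,x)$ right-continuously from the rationals to all reals manufactures a genuine distribution function, which defines a unique Borel probability measure on $[0,1]$; pushing it back through $\psi^{-1}$ yields $R(\cdot\mid\phi=x)\in\PO$, and I set $R(\cdot\mid\phi=x)$ equal to a fixed point mass for $x\in N$. This settles property (b).

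It remains to verify property (a). By construction $\IO \1_{\{\psi\le t\}}\,R(d\omega\mid\phi=x)=E_R(\1_{\{\psi\le t\}}\mid\phi=x)$ for every rational $t$ and $R_\phi\ae$ $x$. The family of Borel sets $A$ for which $x\mapsto \IO \1_A\,R(d\omega\mid\phi=x)$ is $R_\phi$-measurable and coincides $R_\phi\ae$ with $E_R(\1_A\mid\phi=\cdot)$ is a Dynkin system — closure under complements uses that the measures are probabilities and under increasing limits the monotone convergence theorem — and it contains the $\pi$-system $\{\{\psi\le t\}: t\in\mathbb{Q}\}$ generating the Borel $\sigma$-field, hence equals it. Passing from indicators to arbitrary $f\in L^1(R)$ by linearity and monotone convergence finishes (a). The main obstacle throughout is precisely this passage from finite to countable additivity: it is resolved not by any single estimate but structurally, by transporting the problem to $[0,1]$, where monotone right-continuous functions produce measures for free. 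This is exactly the step that would collapse if $\OO$ were merely an abstract measurable space rather than a standard Borel (Polish) one.
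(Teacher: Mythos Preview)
The paper does not actually prove this proposition: its entire proof reads ``This well-known and technically delicate result can be found at \cite[Thm 10.2.2]{Dud02}.'' Your proposal, by contrast, supplies the standard self-contained argument---reduce to a probability measure, transport $\Omega$ to $[0,1]$ via a Borel isomorphism, build conditional distribution functions along the rationals, regularize, and close up with a $\pi$--$\lambda$ argument. This is essentially the proof one finds in Dudley (and most graduate texts), so your approach is not different from the paper's in spirit; you have simply unpacked the citation.

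One point worth flagging. The proposition as stated only assumes that the \emph{target} space $\XX$ is Polish; nothing in the statement itself says $\Omega$ is standard Borel. Your argument uses crucially that $\Omega$ is Polish (to get the Borel isomorphism $\psi:\Omega\to[0,1]$), and you justify this by invoking the ambient convention of the paper that $\Omega=D(\ii,\XX)$ or $C(\ii,\XX)$. That is fine in context---and indeed Dudley's Theorem 10.2.2 also requires $\Omega$ to be Polish---but it means your proof would not go through for the proposition read in isolation on an abstract measurable space $\Omega$. Since the paper's own ``proof'' relies on the same hypothesis via the citation, this is not a defect of your argument relative to the paper; it is a slight imprecision in the proposition's statement that you have handled correctly.
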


\begin{proof}
This well-known and technically delicate result can be found at
\cite[Thm 10.2.2]{Dud02}.
\end{proof}

In the setting of Proposition \ref{res-LZ06}, the disintegration formula \eqref{eq-LZ04} is
\begin{equation*}
    R(d\omega)=\int_{\XX} R(d\omega\mid\phi=x)\,R_\phi(dx).
\end{equation*}

The main assumption for defining properly $E_R(f\mid\mathcal{A})$ with $f\in L^1(R)$ at
Definition \ref{def-LZ08} is that $R_\mathcal{A}$ is $\sigma$-finite. In the special case where $\mathcal{A}=\mathcal{A}(\phi),$ it is equivalent to the following.

\begin{assumption}\label{hyp-01}
The measure $R_\phi\in\MX$ is $\sigma$-finite.
\end{assumption}

\begin{remark}[About this assumption]\label{rem-LZ05}
It is necessary that $R$ is $\sigma$-finite for $R_\phi$ to be  $\sigma$-finite too. Indeed, if $\seq{\XX}n$ is a $\sigma$-finite partition of $R_\phi,$ $(\phi^{-1}(\XX_n))_{n\ge1}$ is a countable  measurable partition of $\Omega$ which satisfies
$R(\phi^{-1}(\XX_n))=R_\phi(\XX_n)<\infty$ for all $n.$
This means that it finitely partitions $R$.
\end{remark}

\subsection*{Radon-Nikodym derivative and conditioning}

In addition to the measurable mapping
$\phi:\Omega\to\XX$ and the positive measure $R\in\MO,$ let us introduce another positive measure $P\in\MO$ which admits a Radon-Nikodym derivative with respect to  $R:$ $P\prec R.$

\begin{proposition}\label{res-LZ04} Under the Assumption \ref{hyp-01}, let us suppose that  $P$ is bounded and $P\prec R.$ Then, 
\begin{enumerate}
    \item We have $P_\phi\prec R_\phi$ and
$$\frac{dP_\phi}{dR_\phi}(\phi)=E_R\left(\frac{dP}{dR}\mid \phi\right),\quad
R\ae$$
    \item For any bounded measurable function $f,$
    $$
E_P(f|\phi)E_R\left(\frac{dP}{dR}\mid \phi\right) =E_R\left(\frac{dP}{dR} f\mid
\phi\right),\quad R\ae
    $$
    \item Furthermore, $$E_R\left(\frac{dP}{dR}\mid \phi\right)>0, \quad P\ae$$
\end{enumerate}
\end{proposition}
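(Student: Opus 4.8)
The plan is to set $\theta := dP/dR$, which lies in $L^1(R)$ since $P$ is bounded (indeed $\int_\Omega \theta\,dR = P(\Omega)<\infty$), and then to exploit repeatedly the fundamental identity \eqref{eq-LZ03} characterizing $E_R(\cdot\mid\phi)$. This is legitimate because Assumption \ref{hyp-01} guarantees that $R_\phi$ is $\sigma$-finite, so all conditional expectations below are well defined and are determined $R$-a.e.\ by their action against bounded functions of $\phi$. Throughout, I use that $P_\phi$ is bounded, hence $\sigma$-finite, so $E_P(\cdot\mid\phi)$ is available as well.

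For statement (1), I would start from the defining property of the pushforward: for any bounded measurable $\psi:\XX\to\RR$,
$$
\int_\XX \psi\,dP_\phi=\int_\Omega \psi(\phi)\,dP=\int_\Omega \psi(\phi)\theta\,dR.
$$
Since $\psi(\phi)$ is bounded and $\mathcal{A}(\phi)$-measurable, \eqref{eq-LZ03} rewrites the right-hand side as $\int_\Omega \psi(\phi)E_R(\theta\mid\phi)\,dR$. Writing $E_R(\theta\mid\phi)=\widetilde\theta(\phi)$ with $\widetilde\theta:\XX\to[0,\infty)$ measurable, this equals $\int_\XX \psi\,\widetilde\theta\,dR_\phi$. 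Extending from bounded $\psi$ to arbitrary nonnegative measurable $\psi$ by truncation and the monotone convergence theorem yields $\int_\XX \psi\,dP_\phi=\int_\XX \psi\,\widetilde\theta\,dR_\phi$ for all $\psi\ge 0$, which is precisely $P_\phi\prec R_\phi$ with $dP_\phi/dR_\phi=\widetilde\theta$, i.e.\ the announced formula $dP_\phi/dR_\phi(\phi)=E_R(\theta\mid\phi)$, $R$-a.e.

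For statement (2), the strategy is to test both candidate functions against an arbitrary bounded $\psi(\phi)$ and to check that the integrals coincide. On one side, the defining property of $E_P(f\mid\phi)$ combined with $dP=\theta\,dR$ gives $\int_\Omega \psi(\phi)E_P(f\mid\phi)\,dP=\int_\Omega \psi(\phi)E_P(f\mid\phi)\,\theta\,dR$; since $E_P(f\mid\phi)$ is bounded (as $f$ is) and $\mathcal{A}(\phi)$-measurable, the property of pulling an $\mathcal{A}(\phi)$-measurable factor out of $E_R(\cdot\mid\phi)$ rewrites this as $\int_\Omega \psi(\phi)\,E_P(f\mid\phi)\,E_R(\theta\mid\phi)\,dR$. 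On the other side, $\int_\Omega \psi(\phi)f\,dP=\int_\Omega \psi(\phi)f\theta\,dR=\int_\Omega \psi(\phi)E_R(f\theta\mid\phi)\,dR$ by \eqref{eq-LZ03}. As the two resulting $\mathcal{A}(\phi)$-measurable integrands agree against every bounded $\psi(\phi)$ and $R_\phi$ is $\sigma$-finite, they coincide $R$-a.e., which is the claimed identity. Finally, for statement (3) the quickest route is to apply \eqref{eq-LZ03} to the $\mathcal{A}(\phi)$-measurable set $N:=\{E_R(\theta\mid\phi)=0\}$, obtaining $P(N)=\int_N\theta\,dR=\int_N E_R(\theta\mid\phi)\,dR=0$, so that $E_R(\theta\mid\phi)>0$ holds $P$-a.e.

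I expect the delicate point to be statement (2), specifically the simultaneous switch between the two reference measures $P$ and $R$ inside the conditional expectations together with the extraction of the bounded $\mathcal{A}(\phi)$-measurable factor $E_P(f\mid\phi)$ from $E_R(\cdot\mid\phi)$; every other step is a routine application of \eqref{eq-LZ03} and a monotone-convergence extension, with the $\sigma$-finiteness of $R_\phi$ (and of $P_\phi$) doing the essential work of making each conditional expectation meaningful and uniquely determined $R$-a.e.
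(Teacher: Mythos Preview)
Your proposal is correct and follows essentially the same route as the paper: for each of the three items you test against an arbitrary bounded $\mathcal{A}(\phi)$-measurable function, use the defining relation \eqref{eq-LZ03} (together with $dP=\theta\,dR$), and then identify the resulting $\mathcal{A}(\phi)$-measurable integrands; your argument for (3) with the explicit set $N=\{E_R(\theta\mid\phi)=0\}$ is exactly the paper's argument phrased contrapositively.
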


\begin{remark}
One might not have $E_R(\frac{dP}{dR}|\phi)>0,$  $R\ae$
\end{remark}

\begin{proof}
As $P$ is bounded, we have
    \begin{equation*}
    \frac{dP}{dR}\in L^1(R)
    \end{equation*}
and we are allowed to consider $E_R(\frac{dP}{dR}
f\mid\phi)$ for any bounded measurable function $f.$

 \Boulette{(1)}
For any bounded measurable function $u$ on $\XX,$
\begin{eqnarray*}
  E_{P_\phi}(u)&=&E_P(u(\phi))=E_R\left(\frac{dP}{dR}
u(\phi)\right) \\
  &=& E_R\left(u(\phi)
E_R\left(\frac{dP}{dR}\mid\phi\right)\right)
    =E_{R_\phi}\left(u
E_R\left(\frac{dP}{dR}\mid\phi=\cdot\right)\right)
\end{eqnarray*}

    \Boulette{(2)}
For any bounded measurable functions  $f,h$ with
$h\in\mathcal{A}(\phi),$ we ahve
\begin{eqnarray*}
  E_P(h f)&=&E_R\left(\frac{dP}{dR} h f\right)=E_R\left(h  E_R\left(\frac{dP}{dR} f\mid
\phi\right)\right)\quad \textrm{and} \\
  E_P(h f)&=&E_P(h E_P(f\mid\phi))=E_R\left(h
E_P\left(f\mid\phi\right)\frac{dP}{dR}\right)=E_R\left[h
E_P(f\mid\phi)E_R\left(\frac{dP}{dR}\mid\phi\right)\right].
\end{eqnarray*}
The desired result follows by identifying the  right-hand side terms of these series of equalities.

\Boulette{(3)} Let $A\in\mathcal{A}(\phi)$ be such that $\1_A
E_R\left(\frac{dP}{dR}\mid\phi\right)=0,$ $R\ae$ Then,\\ $0=E_R\left(\1_A
E_R\left(\frac{dP}{dR}\mid\phi\right)\right)=E_R\left(\frac{dP}{dR}
\1_A\right)=P(A).$ This proves the desired result.
\end{proof}


\end{document}